\def\bF{{\mathbf F}}
\def\bbC{{\mathbb C}}
\def\C{{\mathbb C}}
\def\Z{{\mathbb Z}}
\def\bbN{{\mathbb N}}
\def\bbR{{\mathbb R}}
\def\bfone{{\boldsymbol 1}}
\def\sX{{\mathscr X}}
\def\sY{{\mathscr Y}}
\newtheorem{thm}{Theorem}[section]
\newtheorem{lem}[thm]{Lemma}
\newtheorem{dfn}[thm]{Definition}
\newtheorem{cor}[thm]{Corollary}
\newtheorem{pro}[thm]{Proposition}
\newtheorem{remark}[thm]{Remark}
\def\verts{\mathsf{V}}
\def\vertex{\mathsf{V}}
\def\edges{\mathsf{E}}
\def\edge{\mathsf{E}}
\def\Hilb{{\mathscr H}}
\def\fd{{\mathscr F}}
\def\E{{\bf E}}
\def\P{{\bf P}}
\def\dfnterm#1{\textit{\textbf{#1}}}
\def\st{\,;\;}
\def\arXiv#1{\url{http://www.arxiv.org/abs/#1}}
\def\restrict{\mathord{\upharpoonright}}   
\def\gp{{\Gamma}}
\def\gpe{{\gamma}}
\def\dom{\preccurlyeq}   
\font\frak=eufm10   
\font\scriptfrak=eufm7
\font\scriptscriptfrak=eufm5
\def\mathfrak#1#2{
\def#1{{\mathchoice%
{{\hbox{\frak #2}}}%
{{\hbox{\frak #2}}}%
{{\hbox{\scriptfrak #2}}}%
{{\hbox{\scriptscriptfrak #2}}}}}}
\mathfrak{\ba}{B}  
\mathfrak{\qba}{S}  
\mathfrak{\fo}{F}  
\def\fsf{\mathsf{FSF}}         
\def\FSF{\mathsf{FSF}}         
\def\wsf{\mathsf{WSF}}         
\def\WSF{\mathsf{WSF}}         
\def\ust{\mathsf{UST}}         
\def\STAR{\bigstar}        
\def\CYCLE{\diamondsuit}
\def\cbuldot{{\raise.25ex\hbox{$\scriptscriptstyle\bullet$}}}
\def\iprod#1{\langle #1 \rangle}  
\def\FS{\bF_S}   
\def\gh{G}  
\def\marks{{\Xi}}  
\def\mk{\xi}  
\def\bp{o}
\def\rtd{\mu}  
\def\GG{{\mathcal G}}
\def\sGG{{\mathcal G}_{\star}}
\def\cd{\Rightarrow}  
\def\gtwo{{{\mathcal G}_{**}}}  
\def\flr#1{\lfloor #1 \rfloor}   
\DeclareMathOperator{\tr}{tr}  
\def\im{{\rm im}}  
\def\dbar{\bar d}
\def\Pbig#1{\P\big[#1\big]}
\def\BLPSusf{\cite{BLPS:usf}}
\def\thmenv#1#2#3{\begin{#1} \label{#1:#2} #3 \end{#1}}
\def\procl#1.#2 #3\endprocl{%
       \ifx#1t\thmenv{thm}{#2}{#3}\fi
       \ifx#1l\thmenv{lem}{#2}{#3}\fi
       \ifx#1p\thmenv{pro}{#2}{#3}\fi
       \ifx#1c\thmenv{cor}{#2}{#3}\fi
       \ifx#1d\thmenv{dfn}{#2}{#3}\fi
       \ifx#1g\thmenv{conj}{#2}{#3}\fi
       \ifx#1q\thmenv{question}{#2}{#3}\fi
       \ifx#1r\thmenv{remark}{#2}{{\rm #3}}\fi
    }%
\def\rref#1.#2/{%
      \ifx #1sSection~\ref{s.#2}\fi
      \ifx #1tTheorem~\ref{thm:#2}\fi  
      \ifx #1lLemma~\ref{lem:#2}\fi 
      \ifx #1cCorollary~\ref{cor:#2}\fi 
      \ifx #1pProposition~\ref{pro:#2}\fi 
      \ifx #1dDefinition~\ref{dfn:#2}\fi
      \ifx #1gConjecture~\ref{conj:#2}\fi 
      \ifx #1qQuestion~\ref{question:#2}\fi 
      \ifx #1rRemark~\ref{remark:#2}\fi 
      \ifx #1aAppendix~\ref{a.#2}\fi 
      \ifx #1fFigure~\ref{f.#2}\fi
      \ifx #1e(\eqref{e.#2})\fi
      \ifx #1b\cite{#2}\fi
        }
\def\rlabel #1 #2{\begin{equation} \label{#1} #2 \end{equation}}
\def\rproof{\begin{proof}}
\def\Qed{\end{proof}}
\def\bsection#1#2{\bigbreak\section{#1}\label{#2}}
\begin{document}

\onehalfspacing

\title[Invariant Couplings]{Invariant Coupling of Determinantal Measures \\ on Sofic Groups}
\author{Russell Lyons}
\address{R.L., Dept.\ of Math., 831 E. 3rd St., Indiana Univ., Bloomington, IN 47405-7106, USA}
\email{rdlyons@indiana.edu}

\author{Andreas Thom}
\address{A.T., Math. Institut, Univ. Leipzig, PF 100920, D-04009 Leipzig, Germany
}
\email{thom@math.uni-leipzig.de}

\thanks{R.L.'s research is
partially supported by NSF grant DMS-1007244 and Microsoft Research. A.T.'s research is supported by ERC-StG 277728.}

\date{15 May 2014}  

\begin{abstract}
\baselineskip=12pt
To any positive contraction $Q$ on $\ell^2(W)$, there is associated a
determinantal probability measure $\P^Q$ on $2^W$, where $W$ is a
denumerable set.
Let $\gp$ be a countable sofic finitely generated
group and $G = (\gp, \edge)$ be a Cayley graph of $\gp$.
We show that if $Q_1$ and $Q_2$ are two $\gp$-equivariant positive
contractions on $\ell^2(\gp)$ or on $\ell^2(\edge)$ with $Q_1 \le Q_2$,
then there exists a $\gp$-invariant monotone coupling of the corresponding
determinantal probability measures witnessing the stochastic domination
$\P^{Q_1} \dom \P^{Q_2}$.
In particular, this applies to the wired and free uniform spanning forests,
which was known before only when $\gp$ is residually amenable.
In the case of spanning forests, we also give a second more explicit proof,
which has the advantage of showing an explicit way to create the free
uniform spanning forest as a limit over a sofic approximation.
Another consequence of our main result is to prove that all determinantal
probability measures $\P^Q$ as above are $\dbar$-limits of finitely
dependent processes. Thus, when $\gp$ is amenable, $\P^Q$ is
isomorphic to a Bernoulli shift, which was known before only when $\gp$ is
abelian.
We also prove analogous results for sofic unimodular random rooted graphs.
\end{abstract}

\maketitle

\tableofcontents

\bsection{Introduction}{s.intro}

The study of Bernoulli percolation and other random subgraphs of Cayley
graphs of non-amenable groups began to flourish in the mid 1990s.
Although the lack of averaging over F\o lner sequences was replaced by use
of the Mass-Transport Principle, and expansion of all finite sets was even
turned to advantage, coupling questions remained vexing:
In the amenable context, if two invariant probability
measures have the property that one stochastically dominates the other,
then there is an invariant coupling (joining) of the two measures that
witnesses the domination, which is called a monotone coupling.
Whether this holds on non-amenable groups remained open until resolved in
the negative by Mester \rref b.Mester:mono/.
Nonetheless, an invariant monotone coupling may still exist for certain
pairs of measures.
Indeed, the coupling question was originally motivated by the special case
of the so-called wired and free spanning forest measures, denoted $\wsf$
and $\fsf$.
Bowen \cite{bowen} showed that a monotone joining exists for $\wsf$ and
$\fsf$ on every residually amenable group.
We show that it exists for $\wsf$ and $\fsf$ on every sofic group, a wide
class of groups that no group is known not to belong to.
We show this as a consequence of a more general result: These spanning
forest measures are examples of determinantal probability measures, where,
as we review in the next section, for every positive contraction on
$\ell^2(W)$, $W$ being a denumerable set, there is associated a probability
measure $\P^Q$ on $2^W$. It is known \cite{Lyons:det,BBL:Rayleigh} that when $Q_1 \le
Q_2$, we have the stochastic domination $\P^{Q_1} \dom \P^{Q_2}$.
We show that
for any pair of
determinantal probability measures corresponding to equivariant
positive contractions
$Q_1 \le Q_2$ on $\ell^2$ of any sofic group, there is not only a monotone
coupling of $\P^{Q_1}$ with $\P^{Q_2}$, but in fact one that is
$\gp$-invariant.
Determinantal probability measures and point processes are a class of
measures that appear in a variety of contexts, including several areas of
mathematics as well as physics and machine learning. See, e.g., \cite{
Macchi, Soshnikov:survey, Lyons:det, 
KulTas}.

This is our main result, obtained via some abstract considerations of
ultraproducts of tracial von Neumann algebras.
The proof proceeds along the following broad outline: 
Given $\gp$-equivariant positive contractions $Q_1 \le Q_2$
and a sofic approximation to a Cayley diagram of
$\gp$ by finite graphs $G_n$ labelled with the generators of $\gp$, we form
the metric ultraproduct of the sequence of von Neumann algebras associated
to the sequence $(G_n)_n$.
This ultraproduct allows us to approximate $Q_i$ by positive contractions
$Q_{1, n} \le Q_{2, n}$ on $\ell^2\big(\verts(G_n)\big)$.
We may then take a limit point of monotone couplings of $\P^{Q_{1, n}}$ with
$\P^{Q_{2, n}}$ to obtain a $\gp$-invariant monotone coupling of $\P^{Q_1}$
with $\P^{Q_2}$.
We remark that ultraproducts are not essential to our proofs, but they
allow us to make convenient statements about operators before deducing
corresponding consequences for determinantal probability measures.

We also describe some consequences for comparisons of return probabilities
of random walks in random environments.
While we are able to deduce a consequence for $\fsf$ that was not
previously known, our coupling is not sufficiently explicit to deduce
much more.
In the motivating case of $\wsf$ and $\fsf$, we also give a somewhat more
concrete way to obtain such a coupling.
In particular, this concrete approach yields a simple way to obtain the
$\fsf$ as a limit over a sofic approximation.
Namely, for $L \ge 0$, let $\CYCLE_L(G)$ denote the space spanned by the
cycles in $G$ of length at most $L$.
Write $\fsf_{G, L}$ for the determinantal probability measure corresponding to
the orthogonal projection onto $\CYCLE_L(G)^\perp$.
We show that if a Cayley graph $G$
is the random weak limit of $(G_n)_n$, then for $L(n) \to\infty$
sufficiently slowly,
the random weak limit of $\fsf_{G_n, L(n)}$ 
equals $\fsf_G$.
We also extend our result from the context of Cayley graphs to its
natural setting of unimodular random rooted networks.

Finally, we derive
a consequence of our monotone joining result for the ergodic
theory of group actions.
Let $\gp$ be a countable group and $X$ and $Y$ be two sets on which $\gp$
acts. A map
$\phi \colon X \to Y$ is called \dfnterm{$\gp$-equivariant} if $\phi$
intertwines the actions of $\gp$: 
$$
\phi(\gpe x) = \gpe \big(\phi(x)\big) \qquad (\gpe \in \gp,\, x \in X)
\,.
$$
If $X$ and $Y$ are both measurable spaces, then a $\gp$-equivariant
measurable $\phi$ is called a \dfnterm{$\gp$-factor}.
Let $\mu$ be a measure on $X$. If $\phi$ is a $\gp$-factor, then the
push-forward measure $\phi_* \mu$ is called a \dfnterm{$\gp$-factor of $\mu$}.
The measure $\mu$ is \dfnterm{$\gp$-invariant} if 
$$
\mu(\gpe B) = \mu(B) \qquad (\gpe \in \gp,\, B \subseteq X \hbox{
measurable}).
$$
If $\nu$ is a measure on $Y$, then a $\nu$-a.e.-invertible $\gp$-factor
$\phi$ such that $\nu = \phi_* \mu$ is called an \dfnterm{isomorphism} from
$(X, \mu, \gp)$ to $(Y, \nu, \gp)$.
We are interested in the case where $X$ and $Y$ are product spaces
of the form $A^\gp$ or, more generally, $A^W$, where $A$ is a measurable
space and $W$ is a countable set on which $\gp$ acts. In such a case,
$\gp$ acts on $A^W$ by 
$$
\big(\gpe \omega\big)(x) 
:=
\omega( \gpe^{-1} x)  \qquad (\omega \in A^W,\, x \in  W,\, \gpe
\in \gp)
\,.
$$
If $\lambda$ is a probability measure on $A$ and
$\mu$ is the product measure $\lambda^W$, then we call the action of $\gp$
on $A^W$ a \dfnterm{Bernoulli shift}.
Ornstein \cite{Orn:book} proved a number of fundamental results about
Bernoulli shifts for $\gp = \Z$ that were extended in work with Weiss
\cite{OrnW:amen} to amenable groups.
In particular, they showed that factors of Bernoulli shifts are isomorphic
to Bernoulli shifts.
On the other hand, in the non-amenable setting, Popa
\rref b.Popa/ gave an example of a factor of a Bernoulli shift that is not
isomorphic to a Bernoulli shift.
More generally, it is not well understood which actions are factors of
Bernoulli shifts when $\gp$ is not amenable.
The utility of factors of Bernoulli shifts
on non-amenable groups has
been shown in various ways; for example, see 
\cite{Popa,
ChifanIoana,
Houdayer,
Lyons:fixed,
AW:Bernoulli,
Kun:Lip,
Lyons:fiid}.

It is easy to see that every factor of a Bernoulli shift is a $\dbar$-limit of
finitely dependent processes, by approximating the factor with a block factor.
When $\gp$ is amenable, the converse is true: every $\dbar$-limit of finitely
dependent processes is a factor of a Bernoulli shift.
Although we do not know whether 
determinantal probability measures on $2^\gp$ arising from equivariant
positive contractions are factors of Bernoulli shifts,
we show here that they are $\dbar$-limits of finitely dependent
$\gp$-invariant probability measures on $2^\gp$
provided $\gp$ is
sofic; in particular, when $\gp$ is amenable, these determinantal measures
are isomorphic to Bernoulli shifts, a result that was first shown for
abelian $\gp$ by \rref b.LS:dyn/.

In \rref s.def/, we give the relevant background on determinantal measures,
including the motivating case of spanning forests.
We discuss various basic notions related to groups, their Cayley graphs, 
their von Neumann algebras, and soficity in \rref s.cay/.
A review of ultraproducts and some new results we need is in \rref
s.proofs/.
These tools then lead quickly to a proof of our main result in \rref
s.existence/.
Following this, \rref s.approxim/ gives the alternative proof for the
spanning forest measures.
Consequences of an invariant coupling are discussed in \rref s.conseq/, including the
definitions of the $\dbar$-metric and finitely dependent processes.
Tools needed for the extension to unimodular random rooted networks are
given in \rref s.unimodular/.
In fact, this generalization has the advantage that the case of measures on
subsets of edges, which had to be treated separately and somewhat
cumbersomely in earlier sections, here can be deduced as merely a special
case.
After these tools are developed, we again have a short proof of the
existence of unimodular (sofic) couplings in \rref s.sofic-couple/.

\bsection{Determinantal probability measures}{s.def}

A determinantal probability measure is one whose elementary
cylinder probabilities are given by determinants.
More specifically, suppose that $E$ is a finite or countable set and that
$Q$ is an $E \times E$ matrix.
For a subset $A \subset E$, let $Q\restrict A$ denote the submatrix of
$Q$ whose rows and columns are indexed by $A$.
If $\qba$ is a random subset of $E$ with the property that for all finite $A
\subset E$, we have 
\rlabel e.DPM
{\P[A \subset \qba] = \det (Q\restrict A)
\,, }
then we call $\P$ a \dfnterm{determinantal probability measure}.
The inclusion-exclusion principle in combination with \eqref{e.DPM}
determines the probability of each elementary
cylinder event.
Therefore,
for every $Q$, there is at most one probability
measure satisfying \eqref{e.DPM}.
Conversely, it is known (see, e.g., \rref b.Lyons:det/) that there is a
determinantal probability measure corresponding to $Q$ if $Q$ is the matrix of
a \dfnterm{positive contraction} on $\ell^2(E)$ (in the standard
orthonormal basis), which means that for all $u \in \ell^2(E)$, we have
$0 \le \iprod{Q u, u} \le \iprod{u, u}$.

We identify a subset of $E$ with an element of $\{0, 1\}^E = 2^E$ in the
usual way.

An event $\mathcal A \subseteq 2^E$ is called \dfnterm{increasing} if for
all $A \in  \mathcal A$ and all $e \in  E$, we have $A \cup \{e\} \in
\mathcal A$.
Given two probability measures $\P^1$, $\P^2$ on $2^E$, we say that
\dfnterm{$\P^2$ stochastically dominates $\P^1$} and write $\P^1 \dom \P^2$ if
for all increasing events $\mathcal A$, we have $\P^1(\mathcal A) \le
\P^2(\mathcal A)$.
A \dfnterm{coupling} of two probability measures $\P^1$, $\P^2$ on
$2^E$ is a probability measure $\mu$
on $2^E \times 2^E$ whose coordinate projections are $\P^1$, $\P^2$.
A coupling $\mu$ is called
\dfnterm{monotone} if
$$
\mu\big\{(A_1, A_2) \st A_1 \subset A_2\big\} = 1
\,.
$$
By Strassen's theorem \cite{Strassen}, stochastic
domination $\P^1 \preccurlyeq \P^2$ is equivalent to the existence of a
monotone coupling of $\P^1$ and $\P^2$.
However, even if $\P^1$ and $\P^2$ are $\gp$-invariant probability measures
on $2^\gp$ with $\P^1 \dom \P^2$, it does not follow that there is a
$\gp$-invariant monotone coupling of $\P^1$ and $\P^2$; see \rref
b.Mester:mono/.
We need the following theorem; see \rref b.Lyons:det/ 
and \rref b.BBL:Rayleigh/.
\procl t.dominate
Let $E$ be finite and let
$Q_1 \le Q_2$ be positive contractions of $\ell^2(E)$. Then $\P^{Q_1} \dom
\P^{Q_2}$.
\endprocl
The most well-known example of a (nontrivial discrete) determinantal
probability measure is that where $\qba$ is a uniformly chosen random spanning
tree of a finite connected graph $G = (\vertex, \edge)$ with $E := \edge$.
In this case, $Q$ is the \dfnterm{transfer current matrix} $Y$,
which is defined as follows.
Orient the edges of $G$ arbitrarily.
Regard $G$ as an electrical network with each edge having unit
conductance.  Then $Y(e, f)$ is the amount of current flowing along the edge
$f$ when a battery is hooked up between the endpoints of $e$ of such
voltage that in the network as a whole, unit current flows from the tail of
$e$ to the head of $e$.
The fact that \eqref{e.DPM} holds for the uniform spanning tree is due to
\rref b.BurPem/ and is called the Transfer Current Theorem.
The case with $|A| = 1$ was shown much earlier by \rref b.Kirchhoff/, while
the case with $|A| = 2$ was first shown by \rref b.BSST/.
Write $\ust_G$ for the uniform spanning tree measure on $G$.

The study of the analogue on an infinite graph of a uniform spanning tree
was begun by \rref b.Pemantle:ust/ at the suggestion of Lyons.
Pemantle showed
that if an infinite connected graph $G$ is exhausted by a sequence of finite
connected subgraphs $G_n$, then the weak limit of the uniform spanning tree
measures $\ust_{G_n}$ on $G_n$ exists. 
However, it may happen that the limit measure is not supported on trees,
but on forests.
This limit measure is now called the \dfnterm{free uniform spanning forest} on
$G$, denoted $\fsf_G$. Considerations of electrical networks play the
dominant role in the proof of existence of the limit.
If $G$ is itself a tree, then this measure is
trivial, namely, it is concentrated on $\{G\}$. Therefore, \rref b.Hag:rcust/
introduced another limit that had been considered more implicitly by \rref
b.Pemantle:ust/ on $\Z^d$, namely, the weak limit of the uniform
spanning tree measures on $G_n^*$, where $G_n^*$ is the graph $G_n$ with
its boundary identified to a single vertex. As \rref b.Pemantle:ust/ showed,
this limit also always exists
on any graph and is now called the \dfnterm{wired uniform spanning forest},
denoted $\wsf_G$.  

In many cases, the free and the wired limits are the same. In particular,
this is the case on all euclidean lattices such as $\Z^d$.  The general
question of when the free and wired uniform spanning forest measures are
the same turns out to be quite interesting: The measures are the same iff
there are no nonconstant harmonic Dirichlet functions on $G$ (see
\BLPSusf). For a Cayley graph of a group $\Gamma$, this condition is
equivalent to the non-vanishing of the first $\ell^2$-Betti number of the
group, i.e., $\beta_1^{(2)}(\Gamma) \neq 0$. This important property of a
group and its implications have been studied extensively; see, for
example, \cite{bekka, petthom}. 

In the paper \BLPSusf, it was noted that the Transfer Current Theorem
extends to the free and wired spanning forests if one uses the free and
wired currents, respectively. 
To explain this, note that the orthocomplement of the row space $\STAR(G)$ of
the vertex-edge incidence matrix of a finite graph $G$ is the kernel, denoted
$\CYCLE(G)$, of the matrix.
We call $\STAR(G)$ the \dfnterm{star space} of $G$ and $\CYCLE(G)$ the
\dfnterm{cycle
space} of $G$.
For an infinite graph $G = (\vertex, \edges)$ exhausted by finite subgraphs
$G_n$, we let $\STAR(G)$ be the closure of $\bigcup \STAR(G_n^*)$ and
$\CYCLE(G)$ be the closure of $\bigcup \CYCLE(G_n)$, where we take the closure
in $\ell^2(\edges)$.
Then $\wsf_G$ is the determinantal probability measure corresponding to the
projection $P_{\STAR(G)}$, while $\fsf_G$ is the determinantal probability measure
corresponding to $P_{\CYCLE(G)}^\perp := P_{\CYCLE(G)^\perp}$.
In particular, $\wsf_G = \fsf_G$ iff $\STAR(G) = \CYCLE(G)^\perp$.

While the wired spanning forest is quite well understood,
the free spanning forest measure is in general poorly understood. 
A more detailed summary of uniform spanning forest measures can be found in
\rref b.Lyons:bird/.

If $E$ is infinite and $Q_n$ are positive contractions on $\ell^2(E)$
that tend to $Q$ in the weak operator topology (i.e., in each matrix
entry with respect to the standard orthonormal basis, we have
convergence), then clearly $\P^{Q_n}$ tend to $\P^Q$ weak*.  Since
$\STAR(G_n^*) \subset \CYCLE(G_n)^\perp$, it follows from \rref
t.dominate/ that $\wsf_G \dom \fsf_G$, though this was proved first by other
means in \BLPSusf.

\bsection{Cayley graphs and Cayley diagrams}{s.cay}

For any group $\gp$ acting on a set $X$, if the contraction $Q$ on
$\ell^2(X)$ is $\gp$-equivariant, then $\P^Q$ is $\gp$-invariant.

\begin{dfn} Let $\gp$ be a group. If $S$ is a set of elements of\/ $\gp$, we
write $S^{-1} := \{s^{-1} \st s \in  S\}$. If $S = S^{-1}$ is such that
the smallest subgroup of\/ $\gp$ that contains all elements of $S$ is $\gp$
itself, then the corresponding \dfnterm{Cayley graph} of\/ $\gp$ with respect
to $S$ is the undirected graph whose vertex set is $\gp$ and whose edge set
is $[\gpe, \gpe s]$ for $\gpe \in \gp$ and $s \in S$.
The \dfnterm{Cayley diagram} ${\rm Cay}(\Gamma,S)$ contains more
information and is a labelled oriented graph; namely, it is the graph whose
vertex set is $\gp$ and whose edge set is $(\gpe, \gpe s)$ for $\gpe \in
\gp$ and $s \in S$, where that edge is labelled $s$.
Thus, for each unoriented edge of the Cayley graph, there are two oriented
edges of the Cayley diagram, with inverse labels.
We shall always assume that $S = S^{-1}$.
An \dfnterm{$S$-labelled graph} is a graph each of whose edges is assigned
a label in $S$.
\end{dfn}

A \dfnterm{rooted graph} $(G, o)$ is a graph $G$ with a distinguished
vertex $o$ of $G$. We denote by $[(G, o)]$ the class of rooted graphs that
are isomorphic to $G$ via an isomorphism that preserves the root. If $G$ is
labelled or oriented, then we also require the isomorphism to preserve that
extra structure. Generally we are interested in isomorphism classes and
shall, after the first uses, drop the square brackets in the notation and
thus not distinguish notationally between a rooted graph and its
isomorphism class.

Given a vertex $v$ in a (possibly labelled and oriented) graph $G$ and $r
\ge 0$, write $B(v, r; G)$ for the (possibly labelled and oriented) graph
induced by $G$ on the vertices within distance $r$ of $v$, with $v$ as 
the root.  If $G$ is finite, let $\nu_{G, r}$
denote the law of $[B(v, r; G)]$ when $v$ is chosen uniformly at random.
If $G$ is a Cayley graph with identity element $o$ and $G_n$ are finite
undirected graphs such that 
for every $r > 0$, the laws $\nu_{G_n, r}$ tend to
$\delta_{[B(o, r; G)]}$, then we say that the \dfnterm{random weak limit} of
$(G_n)_n$ is $G$.

We say that $\gp$ is \dfnterm{sofic} if there exists a sequence $(G_n)_{n
\ge 1}$ of finite oriented graphs whose edges are labelled by elements of
$S$ such that for every $r > 0$, the laws $\nu_{G_n, r}$ tend to
$\delta_{[B(o, r; G)]}$, where $G$ is the Cayley diagram of $\gp$.  In this
case, we say that ${\rm Cay}(\Gamma,S)$ is a limit of finite $S$-labelled
graphs. It is well known and easy to see that if ${\rm Cay}(\Gamma,S)$ is
a limit of finite $S$-labelled graphs, then the same holds for any other
finite generating set of elements $S' \subset \Gamma$ \cite{Weiss}.

\begin{dfn}
A $S$-labelled oriented graph $G$ is called an \dfnterm{$S$-labelled
Schreier graph} if
for each vertex in $G$ and each $s \in S$, there is precisely one incoming edge and one outgoing edge with the label $s$.
\end{dfn}

Let $\bF_{S}$ denote the free group on the set $S$, i.e.,
$\bF_{S}$ consists of formal products of letters $s^{\pm}$ with
$s \in S$. We denote the neutral element of $\bF_S$ by $\varnothing$. Each $S$-labelled Schreier graph is equipped with a natural
action of $\bF_S$. The action of $s \in S$ on $v \in \vertex$
yields the unique vertex $v' \in \vertex$ such that there exists an
$s$-labelled oriented edge $(v,v')$. We shall denote this vertex by
$v.s$; and hence consider this action as a right action on $\vertex$.
More generally, for $v \in \vertex$ and $w \in \bF_S$, $v.w$
denotes the vertex which is obtained by an oriented walk following the
labels determined by $w \in \bF_S$. 
Likewise, given an edge $e = (v, v.s)$ and $w \in \FS$, write $e.w := (v.w, v.w.s)$.
For a set $A \subset \FS$, write $v.A := \{v.w \st w \in A\}$ and $e.A :=
\{e.w \st w \in A\}$.

The following was proved in slightly different language by
\cite{ES:direct}. 

\procl l.def-sofic
Let\/ $\Gamma$ be a group and $S$ a generating set of elements of\/
$\Gamma$. The group $\Gamma$ is sofic if and only if\/ ${\rm Cay}(\Gamma,S)$ is a limit of finite $S$-labelled Schreier graphs.
\endprocl

Suppose that $\theta$ is a probability measure on $\{0, 1\}^\vertex$ or
$\{0, 1\}^\edges$ for a graph $G = (\vertex, \edge)$, regarded as giving
random subsets (of either vertices or edges) by using $\{0, 1\}$-valued
marks.
For a vertex $v$ and $r \ge 0$, denote the restriction of $\theta$ to $B(v,
r; G)$ by $\theta(v, r)$.
Write $[\theta(v, r)]$ for the probability measure induced on $[B(v, r;
G)]$.
If $G$ is finite, let $\theta(G, r) := \frac1{|\vertex|}\sum_{v \in
\vertex} [\theta(v, r)]$.
In other words, if $H$ is a rooted $\{0, 1\}$-marked graph of radius at
most $r$, then with $\cong$ denoting isomorphism of rooted marked graphs,
$\theta(G, r)(H) = \frac1{|\vertex|}\sum_{v \in \vertex} \sum_{H' \cong
H} \theta(v, r)(H')$. 
Suppose that $G = (\vertex, \edge)$ is the random weak limit of finite 
graphs $G_n = (\vertex_n, \edge_n)$, that $\theta_n$ are probability
measures on $\{0, 1\}^{\vertex_n}$ or $\{0, 1\}^{\edges_n}$, and that
$\theta$ is a probability measure on $\{0, 1\}^\vertex$ or $\{0,
1\}^\edge$, respectively. 
If the weak* limit of $\theta_n({G_n}, r)$ equals $\theta(o, r)$ for
all $r \ge 0$, then we say that $\theta$ is the \dfnterm{random weak limit}
of $(\theta_n)_n$.

Let $\Gamma$ be a group and $S$ be a finite generating set of elements of
$\Gamma$. There is a natural action of $\Gamma$ on ${\rm Cay}(\Gamma,S)
=(\gp,\edge)$ that we denote by $\lambda \colon \Gamma \to {\rm
Aut}\big({\rm Cay}(\Gamma,S)\big)$. It is defined by the translations
$\lambda(g)(h) = gh$ for $g,h \in \gp$. The vertex set $\gp$ of ${\rm
Cay}(\Gamma,S)$ admits another $\Gamma$ action, which is given by the
formula $\rho(g)(h) = hg^{-1}$ for $g,h \in \Gamma$. Both actions extend to
unitary representations of $\Gamma$ on the Hilbert space $\ell^2 \Gamma$,
which we also denote by $\lambda$ and $\rho$. We denote the natural
orthonormal basis of $\ell^2 \Gamma$ by $\{\delta_\gamma \st \gamma \in
\Gamma \}$. Since the multiplication in $\Gamma$ is associative, these
actions commute. We define the (right) \dfnterm{group von Neumann
algebra} of $\Gamma$ to be the algebra of $\gp$-equivariant operators,
$$R(\Gamma) := \big\{ T \in B(\ell^2 \Gamma) \st \forall g \in \Gamma \colon
\lambda(g)T=T\lambda(g) \big\}\,,$$
and note that $\rho(\Gamma) \subset R(\Gamma)$. We use $\rho$ also to
denote the linearization $\rho \colon \bbC \Gamma \to R(\Gamma)$, which is
a $*$-homomorphism, where $\bbC \Gamma$ denotes the complex group ring,
which carries the natural involution that sends $g$ to $g^{-1}$ and is
complex conjugation on the coefficients. The group von Neumann
algebra $R(\Gamma)$ comes with a natural involution $T \mapsto T^*$ (the
adjoint map) and a \dfnterm{trace}
given by the formula 
$$
\tau(T) := \langle T \delta_o ,\delta_o \rangle
\,,
$$
where $o$ is the identity element of $\gp$. The functional $\tau \colon
R(\Gamma) \to \bbC$ leads to the definition of \dfnterm{spectral measure}:
Let $T \in R(\Gamma)$ be a self-adjoint operator. There exists a unique
probability measure $\mu_T$ on the interval $\big[{-}\|T\|,\|T\|\big] \subset \bbR$ such that
$$\forall n \in \bbN\quad \tau(T^n) = \int t^n \ d \mu_T(t)\, .$$

Note that the action of $\Gamma$ on $\edge$ is isomorphic to the natural left
action of $\Gamma$ on $\Gamma \times S$, where we identify a pair $(\gamma,s)
\in \Gamma \times S$ with the edge $(\gamma,\gamma s) \in \edge$. Abusing
notation, we denote the natural unitary action of $\Gamma$ on $\ell^2 \edge$
also by $\lambda$.
We define the von Neumann algebra of the Cayley diagram ${\rm Cay}(\Gamma,S)$ to be
$$R(\Gamma,S) := \big\{T \in B(\ell^2 \edge) \st \forall g \in \Gamma
\colon \lambda(g)T=T\lambda(g) \big\}\,.$$

\begin{lem} \label{block} There is a natural identification
$$R(\Gamma,S)=M_S\big(R(\Gamma)\big)\,,$$ where $M_S(Z)$ denotes the $S
\times S$-matrices over a ring $Z$.
\end{lem}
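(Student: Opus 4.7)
The plan is to unwind the definitions via the identification of edges with $\Gamma\times S$ and observe that the $\Gamma$-action on $\ell^2\edge$ decomposes as a direct sum of copies of the left regular representation on $\ell^2\Gamma$. Concretely, the paper has already identified the edge $(\gamma,\gamma s)$ with the pair $(\gamma,s)\in\Gamma\times S$, which yields a unitary isomorphism
\[
\ell^2\edge \;\cong\; \ell^2\Gamma\otimes\ell^2 S \;\cong\; \bigoplus_{s\in S}\ell^2\Gamma,
\]
sending $\delta_{(\gamma,s)}$ to $\delta_\gamma\otimes e_s$. Under this isomorphism, the action $\lambda(g)$ on $\ell^2\edge$, which sends $(\gamma,s)$ to $(g\gamma,s)$, becomes $\lambda(g)\otimes\mathrm{id}_{\ell^2 S}$; that is, it is the left regular representation on each $\ell^2\Gamma$-summand, doing nothing to the $S$-index.

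With this decomposition in hand, I would represent an arbitrary $T\in B(\ell^2\edge)$ as an $S\times S$ matrix $(T_{s,t})_{s,t\in S}$ of bounded operators $T_{s,t}\in B(\ell^2\Gamma)$, where $T_{s,t}$ is the composition of inclusion of the $t$-th summand, $T$, and projection to the $s$-th summand. Then the computation $[\lambda(g),T]=0$ translates entrywise: since $\lambda(g)\otimes\mathrm{id}$ is block-diagonal with each block equal to $\lambda(g)$, commuting with this operator is equivalent to $\lambda(g)T_{s,t}=T_{s,t}\lambda(g)$ for every pair $s,t\in S$. Hence $T\in R(\Gamma,S)$ if and only if every entry $T_{s,t}$ lies in $R(\Gamma)$, which is precisely the statement $R(\Gamma,S)=M_S(R(\Gamma))$.

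Finally, I would note that the identification is not merely of sets but of von Neumann algebras: matrix multiplication on $M_S(R(\Gamma))$ corresponds to composition of operators on $\bigoplus_s\ell^2\Gamma$, the involution on $M_S(R(\Gamma))$ (transpose combined with entrywise adjoint in $R(\Gamma)$) corresponds to the adjoint on $B(\ell^2\edge)$, and the weak operator topology matches entrywise. There is essentially no obstacle here; the only point to be careful about is orienting the matrix indices correctly so that $T_{s,t}$ carries the $t$-summand to the $s$-summand, and correspondingly tracking that $\lambda$ acts only on the $\Gamma$-factor and not on the $S$-factor—so the commutation condition does not mix the $S$-indices and the entries are free to range over all of $R(\Gamma)$.
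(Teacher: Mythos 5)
Your proof is correct and is essentially the paper's own argument: the paper likewise identifies each label class $\{(\gamma,\gamma s)\st\gamma\in\Gamma\}$ equivariantly with $\Gamma$ and extracts the matrix entries as $p_s T p_t$ using the corresponding orthogonal projections $p_s$, which is exactly your block decomposition $T_{s,t}$ under the isomorphism $\ell^2\edge\cong\bigoplus_{s\in S}\ell^2\Gamma$. You simply spell out in more detail the verification that the paper summarizes as ``it is easily checked that this identification preserves all the structure.''
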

\begin{proof}
The isomorphism is described as follows. For $s \in S$, we denote the
orthogonal projection onto $\ell^2(\{e \in \edge \st e = (\gamma,\gamma s),
\gamma \in \Gamma \})$ by $p_s$. Clearly, $p_s$ is $\lambda$-equivariant
and there is a natural $\Gamma$-equivariant identification of $\{e \in
\edge \st e = (\gamma,\gamma s), \gamma \in \Gamma \}$ with $\Gamma$.
Indeed, since ${\rm Cay}(\Gamma,S)$ is a Schreier graph, there is precisely
one edge starting at $\gpe \in \Gamma$ with label $s \in S$.
Let now $s,t \in S$.
With a slight abuse of notation, every operator $T \in R(\Gamma,S)$ determines
a matrix $(p_s T p_t)_{s,t \in S}$ of elements in $R(\Gamma)$.
It is easily checked that this identification preserves all the structure.
\end{proof}

Note that the von Neumann algebra $R(\Gamma,S)$ also comes equipped with
a natural trace $\tau_S \colon R(\Gamma,S) \to \bbC$ given by $\tau_S(T)
:= \sum_{s \in S} \tau(p_s T p_s)$ with $p_s$ being as defined in the
proof of Lemma \ref{block}; we call $\tau_S$ the \dfnterm{natural
extension} of $\tau$.

Let now $G=(\vertex,\edge)$ be an $S$-labelled Schreier graph. We
consider the algebra $B(\ell^2 \vertex)$ of bounded operators on the
Hilbert space $\ell^2 \vertex$ and note that we have a natural
identification $B(\ell^2 \edge) = M_S(B(\ell^2 \vertex))$ as in the proof
of the previous lemma. If $\vertex$ is finite, then $B(\ell^2 \vertex)$
carries a natural normalized trace $$\tr_\vertex(T) :=
\frac{1}{|\vertex| }\sum_{v \in \vertex} \langle T \delta_v , \delta_v
\rangle\,.$$ Similarly, there is then a natural trace $\tr_{\edge}$ on the
algebra $B(\ell^2 \edge)$ when $\edges$ is finite.  Every element $w \in
\bF_S$ determines a
unitary operator $\pi(w)$ on $\ell^2(\vertex)$ for each $S$-labelled
Schreier graph
$G=(\vertex,\edge)$. It is given by linearity and the formula
\begin{equation} \label{defpi}
\forall w \in \bF_S \ \ \forall v \in \vertex \quad
\pi(w)(\delta_v) := \delta_{v.w^{-1}}\,.
\end{equation}
By taking linear combinations, every element in the complex group ring $a
\in \bbC \bF_S$ determines a bounded operator $\rho_G(a)$ in
$B(\ell^2 \vertex)$. Note that if $G= {\rm Cay}(\Gamma,S)$, then
$\rho_G(a)=\rho(\pi(a))$, where $\pi \colon \bbC \bF_S \to \bbC
\Gamma$ denotes the natural homomorphism of group rings extending the map
from $S$ to its image in $\Gamma$.

The following lemma is well known \cite{elekszabo}; let us include a proof for convenience.

\begin{lem} \label{lem:sofic}
Let\/ $\gp$ be a sofic group and $S$ be a finite generating set of elements
in $\gp$. Let $(G_n)_n$ be a sequence of finite $S$-labelled Schreier
graphs whose limit is $G:={\rm Cay}(\gp,S)$. Then for each $a \in \bbC
\bF_{S}$, we have
$$\tau(\rho_G(a)) = \lim_{n \to \infty} \tr_{\vertex_n}(\rho_{G_n}(a))\,.$$
\end{lem}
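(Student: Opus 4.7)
The plan is to first reduce by $\bbC$-linearity of both $\tau$ and $\tr_{\vertex_n}$ to the case where $a = w \in \bF_S$ is a single word, and let $r := |w|$ denote its length in the generators $S$. The strategy is then to compute both sides of the identity explicitly and identify each as the ``fraction of vertices fixed by $w$'', after which the conclusion will be immediate from soficity.

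From \eqref{defpi}, for any $S$-labelled Schreier graph $H$ and any vertex $v$ of $H$,
$$\langle \rho_H(w) \delta_v, \delta_v\rangle = \langle \delta_{v.w^{-1}}, \delta_v\rangle = \II{v.w = v},$$
so averaging over $v \in \vertex_n$ gives
$$\tr_{\vertex_n}\bigl(\rho_{G_n}(w)\bigr) = \frac{\bigl|\{v \in \vertex_n : v.w = v\}\bigr|}{|\vertex_n|}.$$
For the Cayley diagram, let $\bar\pi \colon \bF_S \to \Gamma$ be the canonical surjection; the right $\bF_S$-action on vertices of $G = {\rm Cay}(\Gamma, S)$ is $\gamma.w = \gamma\bar\pi(w)$, so that $\rho_G(w) = \rho(\bar\pi(w))$ and hence $\tau(\rho_G(w)) = \II{\bar\pi(w) = e}$.

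The key local observation is that whether $v.w = v$ depends only on the rooted labelled isomorphism class $[B(v, r; H)]$: the walk starting at $v$ that spells out any prefix of $w$ never leaves the $r$-ball around $v$. Consequently, for any $v \in \vertex_n$ with $[B(v, r; G_n)] \cong [B(o, r; G)]$, a root-preserving label-preserving isomorphism transports the walk and we obtain $v.w = v$ in $G_n$ if and only if $o.w = o$ in $G$, i.e., iff $\bar\pi(w) = e$ in $\Gamma$.

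Finally, the soficity hypothesis gives $\nu_{G_n, r}\bigl([B(o, r; G)]\bigr) \to 1$ as $n \to \infty$, so the fraction of ``good'' vertices tends to $1$; combining with the previous step yields
$$\tr_{\vertex_n}(\rho_{G_n}(w)) \longrightarrow \II{\bar\pi(w) = e} = \tau(\rho_G(w)),$$
as desired. I do not expect any genuine obstacle here: the proof is essentially an unpacking of the definitions of the traces $\tau$ and $\tr_{\vertex_n}$, of the operators $\rho_H(w)$, and of local weak convergence of $S$-labelled Schreier graphs.
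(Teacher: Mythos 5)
Your proposal is correct and follows essentially the same route as the paper: reduce by linearity to a single word $w$, identify $\tr_{\vertex_n}(\rho_{G_n}(w))$ with the fraction of vertices fixed by $w$ (noting $v.w^{-1}=v$ iff $v.w=v$ since the action is by bijections), observe this is a local quantity determined by the $|w|$-ball, and conclude from soficity. The only difference is that you make the locality step explicit where the paper treats it as clear.
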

\begin{proof} It is enough to prove the statement for $a = w \in {\mathbf
F}_S$. Then the left-hand side is $1$ or $0$ depending on whether
$\pi(w)= o$ in $\Gamma$ or not.  The right-hand side equals
$$\lim_{n \to \infty}\frac{1}{|\vertex_n| }\sum_{v \in \vertex_n} \langle
\rho_{G_n}(w) \delta_v , \delta_v \rangle$$ and thus measures the
fraction of vertices that are fixed by the action of $\rho_{G_n}(w)$. If
$w^{-1} = s_1^{\varepsilon_1} \cdots s_n^{\varepsilon_n}$ with
$s_1,\dots,s_n \in S$ and $\varepsilon_1,\dots,\varepsilon_n \in \{\pm
1\}$, then this is the fraction of vertices $v \in \vertex_n$ for which an
oriented walk with labels $s_1^{\varepsilon_1}, \dots, s_n^{\varepsilon_n}$
ends at $v \in \vertex_n$, i.e., if $v.w^{-1}=v$ or not. It is clear that this fraction converges to $1$
or $0$ depending on whether the corresponding walk in $G$ starting at $o
\in \Gamma$ ends at $o \in \Gamma$ or not. It ends at $o$ if and only if
$\pi(w) = o$ in $\Gamma$. Hence, the left- and the right-hand sides are
equal. This finishes the proof.
\end{proof}

\bsection{Tracial von Neumann algebras and their ultraproducts}{s.proofs}

Before we introduce the metric ultraproduct of tracial von Neumann
algebras, we shall recall some basic facts about von Neumann algebras. A
\dfnterm{von Neumann algebra} is defined to be a weakly closed unital
$*$-subalgebra of the space $B(H)$ of bounded linear operators on some
Hilbert space $H$. 
Note that the weak topology here refers to the weak operator topology (WOT).

If $K \subset B(H)$ is a self-adjoint subset, then we define the
\dfnterm{commutant} of $K$ as $K' := \{T \in B(H) \st \forall S \in K \
ST=TS \}$. It is easy to see that $K'$ is a von Neumann algebra. Most von Neumann algebras arise naturally as the
commutant of some explicit set of operators.  For example, we defined
$R(\Gamma)= \{\lambda(\gamma) \in B(\ell^2 \Gamma) \st \gamma \in \Gamma
\}'$. Von Neumann's Double Commutant Theorem says that this construction
exhausts all von Neumann algebras; more specifically, $M \subset B(H)$ is a
von Neumann algebra if and only if $M=M^*$ and $M=M''$.

Since any weakly closed algebra is also norm closed in $B(H)$, a von
Neumann algebra is in particular a $C^*$-algebra. Like $C^*$-algebras,
von Neumann algebras admit an abstract characterization. A $C^*$-algebra
$M$ is $*$-isomorphic to a von Neumann algebra if and only if as a
Banach space, $M$ is a dual Banach space; in this case, the
pre-dual $M_*$ is unique up to isometry. In case $M=B(H)$, the pre-dual
$M_*$ is the
Banach space of trace-class operators on $H$ with the natural pairing
between $M$ and $M_*$ given by the trace. It is well known that the weak*
topology (seeing $B(H)$ as a dual Banach space) is nothing but the
so-called ultra-weak topology. Moreover, if $M$ is a weakly closed
sub-algebra of $B(H)$, then the natural weak* topology on $M = (M_*)^*$
can be identified with the ultra-weak topology inherited from $B(H)$.
 
Following Dedekind's characterization (or rather definition) of an infinite set, a projection
(which always means ``orthogonal projection" for us) $P \in M$ in a von
Neumann algebra is called \dfnterm{infinite} if it is equivalent in $M$ to a
subprojection of itself, i.e., if
there exists a partial isometry  $U \in M$ such that
$U^*U=P$ and $UU^*$ is a proper subprojection of $P$. A projection
that is not infinite is called \dfnterm{finite}.  For our purposes, only
\dfnterm{finite} von Neumann algebras matter, and these are (by definition)
the ones in which every projection is finite. Note that $B(\ell^2 \vertex)$ is finite if and only if $\vertex$ is finite.

A certificate for finiteness of a von Neumann algebra $M$ is the
existence of a positive, faithful trace $\tau \colon M \to \bbC$.  Here, a
(linear) functional $\tau \colon M \to \bbC$ is called \dfnterm{positive}
if $\tau(T^*T) \geq 0$. A positive functional is called \dfnterm{faithful} if $\tau(T^*T)=0$ implies
$T=0$. The functional $\tau$ is called a \dfnterm{trace} if
$\tau(TS)=\tau(ST)$ for all $S,T \in M$. For the sake of getting used to
the definitions, let us convince ourselves that a von Neumann algebra $M$
with a positive and faithful trace $\tau \colon M \to \bbC$ is finite.
Let $P$ be a projection and $U$ be a partial isometry with $U^*U=P$. 
Suppose that $UU^*$ is a
subprojection of $P$. Then $Q:=P-UU^*$ is also a projection and hence
$Q=Q^2=Q^*Q$.  However, $\tau(Q^*Q)=\tau(Q) = \tau(U^*U-UU^*) = 0$ and hence $Q=0$. This shows that $UU^*=P$. 
Hence, $M$ is a finite von Neumann algebra. 

A functional $\tau \colon M \to \bbC$ is called
\dfnterm{normal} if it is ultra-weakly continuous; natural examples are
sums of so-called vector states $\tau(T):= \sum_i \langle T \xi_i,\xi_i
\rangle$ for $\xi_1,\dots,\xi_n \in H$. The functional 
is called \dfnterm{unital} if
$\tau(1)=1$. 
We call a pair $(M,\tau)$ a \dfnterm{tracial von Neumann
algebra} if $M$ is a von Neumann algebra and $\tau \colon M \to \bbC$ is
a normal, positive, faithful, and unital trace.  One can show that $\tau
\colon R(\Gamma) \to \bbC$ is a normal, positive, faithful, and unital
trace. Hence, $R(\Gamma)$ is a finite von Neumann algebra.

A $*$-homomorphism $\varphi \colon (N,\tau_N) \to (M,\tau_M)$ between
$*$-algebras equipped with traces is called \dfnterm{trace preserving} if
$\tau_N= \tau_M \circ \varphi$.  If $\tau \colon M \to \bbC$ is a normal,
positive, faithful, and unital trace, then the norm $T \mapsto
\tau(T^*T)^{1/2}$ determines the ultra-weak topology on norm-bounded sets.
In particular, this implies that if $(N,\tau_N)$ and $(M,\tau_M)$ are
tracial von Neumann algebras, $N_0 \subset (N,\tau_N)$ is a dense
$*$-subalgebra, and $\tilde \varphi \colon (N_0,\tau_N \restrict {N_0}) \to
(M,\tau_M)$ is a trace-preserving $*$-homomorphism, then $\tilde \varphi$
has a unique extension to a trace-preserving $*$-homomorphism $\varphi
\colon (N,\tau_N) \to (M,\tau_M)$.  For all these facts about von Neumann
algebras and more, see \cite{takesaki} and the references therein.

The following construction of metric ultraproducts of von Neumann algebras
preserves the class of tracial von Neumann algebras and plays a crucial
role in their study. 
\begin{dfn} \label{dfn:ultra}
Let $(M_n,\tau_n)$ be a sequence of tracial von Neumann algebras.  Let $\omega$ be a non-principal ultrafilter on $\bbN$. We consider 
$$\ell^{\infty}(\bbN,(M_n,\tau_n)_n) := \big\{ (T_n)_n \st T_n \in M_n, \ \sup
\{\|T_n\| \st n \in \bbN\}< \infty \big\}$$
and set
$$J_{\omega}:= \big\{ (T_n)_n \in \ell^{\infty}(\bbN,(M_n,\tau_n)_n) \st
\lim_{n \to \omega} \tau_n(T_n^*T_n) = 0 \big\}\,.$$
It is well known and easy to verify that $J_{\omega} \subset
\ell^{\infty}(\bbN,(M_n,\tau_n)_n)$ is a two-sided $*$-ideal.
The \dfnterm{metric ultraproduct} of $(M_n,\tau_n)$ is defined to be the
quotient
$$\prod_{n \to \omega} (M_{n},\tau_{n}) :=
\frac{\ell^{\infty}(\bbN,(M_n,\tau_n)_n)}{J_{\omega}} \quad \mbox{with
trace}
\quad \tau_{\omega}\big((T_n)_n+J_{\omega}\big) := \lim_{n \to \omega}
\tau_n(T_n)\,.$$
\end{dfn}

It has been shown by Connes \cite[Section I.3]{connes} that $(\prod_{n \to
\omega} (M_{n},\tau_{n}),\tau_{\omega})$ is again a von Neumann algebra
acting on a concrete Hilbert space ${}_\omega H$. This is not
straightforward, since---if $M_n \subset B(H_n)$---the ultraproduct
$\prod_{n \to \omega} (M_{n},\tau_{n})$ does {\em not} act on the Hilbert space arising as the ordinary
Banach space ultraproduct $H_{\omega}:= \prod_{n \to \omega} H_n$ of the
sequence of Hilbert spaces $(H_n)_n$. One can show that $\tau_{\omega}
\colon \prod_{n \to \omega} (M_{n},\tau_{n}) \to \bbC$ is a normal,
positive, faithful, and unital trace.

In the context of the previous definition, we shall say that a sequence
$(T_n)_n$ of operators with $T_n \in M_n$ \dfnterm{represents} some
operator $T$ in
the ultraproduct if $(T_n)_n \in \ell^{\infty}(\bbN,(M_n,\tau_n)_n)$ and
$T$ is its residue class modulo $J_{\omega}$. Since $J_{\omega}$ is a
two-sided $*$-ideal, we get: If $(T_n)_n$ represents $T$ and $(S_n)_n$
represents $S$, then $(T_n^*)_n$ represents $T^*$, $(T_n+S_n)_n$ represents
$T+S$, and $(T_nS_n)_n$ represents $TS$. The following lemma is only
slightly more involved. 
Recall that if $T$ is a self-adjoint operator and $f \colon \bbR \to \bbR$ is
continuous, then $f(T)$ is defined by using the spectral decomposition of
$T$.

\begin{lem} \label{lem:app}
Let $(T_n)_n$ be a sequence of self-adjoint operators in
$\ell^{\infty}(\bbN,(M_n,\tau_n)_n)$ representing some operator $T$ in the ultraproduct. Let $f \colon \bbR
\to \bbR$ be a continuous function. Then the sequence $(f(T_n))_n$ represents $f(T)$. Let $(S_n)_n$ be another sequence in $\ell^{\infty}(\bbN,(M_n,\tau_n)_n)$ of self-adjoint operators representing some operator $S$ in the ultraproduct. If $S_n \leq T_n$ for all $n \in \bbN$, then $S \leq T$.
\end{lem}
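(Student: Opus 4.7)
For the first assertion, the strategy is to reduce to polynomials via Stone--Weierstrass and then pass from uniform approximation in operator norm to approximation in the trace $2$-norm $\|A\|_{2,\tau} := \tau(A^*A)^{1/2}$, which is what defines $J_\omega$. Let $C := \sup_n \|T_n\| < \infty$; the spectra of all $T_n$ and of $T$ lie in $[-C,C]$ (the norm in the ultraproduct is controlled by $\sup_n \|T_n\|$). Given $\varepsilon > 0$, Stone--Weierstrass produces a real polynomial $p$ with $\sup_{x \in [-C,C]} |f(x) - p(x)| < \varepsilon$. Since the paper has already observed that representation is preserved under sums, products, and adjoints, the sequence $(p(T_n))_n$ represents $p(T)$.

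The key inequality $\|A\|_{2,\tau} \leq \|A\|$ (valid in any tracial von Neumann algebra because $\tau$ is a state) then gives, via continuous functional calculus, $\|f(T_n) - p(T_n)\|_{2,\tau_n} \leq \varepsilon$ for every $n$, and similarly $\|f(T) - p(T)\|_{2,\tau_\omega} \leq \varepsilon$. The triangle inequality yields
$$
\big\|f(T) - [(f(T_n))_n]\big\|_{2,\tau_\omega}
\leq \|f(T) - p(T)\|_{2,\tau_\omega}
+ \big\|[(p(T_n))_n] - [(f(T_n))_n]\big\|_{2,\tau_\omega}
\leq 2\varepsilon,
$$
using $p(T) = [(p(T_n))_n]$. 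Since $\varepsilon$ is arbitrary and $\tau_\omega$ is faithful, $f(T) = [(f(T_n))_n]$, proving the first claim.

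For the second assertion, set $A_n := T_n - S_n \geq 0$ and $A := T - S$, which is self-adjoint and represented by $(A_n)_n$. Apply the first part to the continuous function $f(x) := \sqrt{\max(x,0)}$: the sequence $(f(A_n))_n = (A_n^{1/2})_n$ (equality because $A_n \geq 0$) represents $f(A)$. Since $f(A)$ is self-adjoint, $f(A)^2 \geq 0$; and since representation is compatible with products, $f(A)^2 = \bigl[(A_n^{1/2} \cdot A_n^{1/2})_n\bigr] = [(A_n)_n] = A$. Hence $T - S = A \geq 0$, i.e., $S \leq T$. The only point requiring care is the first step, where one must genuinely use that the $\|\cdot\|_{2,\tau}$-topology determines operators in a tracial von Neumann algebra on norm-bounded sets; once this is in hand, the rest is bookkeeping with the functional calculus.
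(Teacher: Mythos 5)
Your proof is correct and follows essentially the same route as the paper's: polynomials plus Stone--Weierstrass for the functional calculus statement, and exhibiting $T-S$ as $C^*C$ for a represented operator $C$ (the paper takes $C_n$ with $T_n-S_n=C_n^*C_n$ directly, which is your $A_n^{1/2}$). You merely fill in the detail the paper leaves implicit, namely that the inequality $\|A\|_{2,\tau}\le\|A\|$ converts uniform approximation of $f$ by $p$ on $[-C,C]$ into the trace-norm estimate defining $J_\omega$.
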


\begin{proof}
By the above, the first statement is clear for polynomial functions. It follows
for general continuous functions by the Stone-Weierstrass approximation
theorem. Note that $S_n \leq T_n$ if and only if $T_n - S_n = C_n^*C_n$
for some $C_n \in M_n$. Since $(S_n)_n, (T_n)_n \in
\ell^{\infty}(\bbN,(M_n,\tau_n)_n)$, we also have $(C_n)_n \in
\ell^{\infty}(\bbN,(M_n,\tau_n)_n)$. Let $C$ be the operator that is
represented by the sequence $(C_n)_n$. Then $T-S = C^*C$ and hence $S
\leq T$. This finishes the proof.
\end{proof}

\begin{remark} \label{specmeas}The definition of metric ultraproduct is made in such a way that it is
compatible with spectral measures in the following sense.
If $(T_n)_n$ is a sequence of
self-adjoint operators in $\ell^{\infty}(\bbN,(M_n,\tau_n)_n)$ that
represents some self-adjoint operator $T$ in the ultraproduct,
then by definition and the above remarks, $\lim_{n \to \omega}
\tau_n(T_n^k) = \tau_\omega(T^k)$ for all $k \ge 0$, whence
$\lim_{n \to \omega} \mu_{T_n} = \mu_T$ in the weak* topology.
\end{remark}

The following proposition summarizes and extends results from \cite{elekszabo}.

\begin{pro} \label{emb}
Let $\gp$ be a sofic group and $S$ be a finite generating set of elements
in $\gp$. Let $(G_n)_n$ be a sequence of finite $S$-labelled Schreier
graphs whose limit is $G:={\rm Cay}(\gp,S)$. There exist trace-preserving
embeddings
$$\iota \colon (R(\Gamma),\tau) \to \prod_{n \to \omega} (B(\ell^2
\vertex_n),\tr_{\vertex_n}) \quad \mbox{and} \quad \iota_S
\colon(R(\Gamma,S),\tau_S) \to \prod_{n \to \omega}(B(\ell^2
\edge_n),\tr_{\edge_n})\,.$$
Moreover, if $(T_n)_n \in \ell^{\infty}( \bbN, (B(\ell^2 \vertex_n),{\rm
tr}_{\vertex_n}))$ represents $\iota(T)$ for some $T \in R(\Gamma)$, then
\begin{equation} \label{eq1}
\lim_{n \to \omega} |\vertex_n|^{-1} \sum_{v \in \vertex_n}
|\iprod{T_n \delta_{v.\gamma}, \delta_{v.\gamma'}} - \iprod{T
\delta_{o.\gpe}, \delta_{o.\gpe'}}| = 0
\end{equation}
for all $\gpe, \gpe' \in \bF_S$. 
Similarly, if $(T_n)_n \in
\ell^{\infty}( \bbN, (B(\ell^2 \edge_n),\tr_{\edge_n}))$
represents $\iota_S(T)$ for some $T \in R(\Gamma,S)$, then
\begin{equation} \label{eq1b}
\lim_{n \to \omega} |\vertex_n|^{-1} \sum_{v \in \vertex_n}
|\iprod{T_n \delta_{(v.\gamma,v.\gamma s)},\delta_{(v. \gamma',v.\gamma's')}} - \iprod{T
\delta_{(o.\gamma,o.\gamma s)}, \delta_{(o.\gamma', o.\gamma's')}}| = 0
\end{equation}
for all $\gpe, \gpe' \in \bF_S$ and $s,s' \in S$.

\end{pro}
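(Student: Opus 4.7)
The plan is to build $\iota$ in two stages: first as an ultraproduct of the maps $\rho_{G_n}$ on the dense $*$-subalgebra $\rho(\bbC\Gamma)\subset R(\Gamma)$, then extend by continuity using the extension principle for tracial von Neumann algebras quoted earlier. For each $a\in\bbC\bF_S$ the sequence $(\rho_{G_n}(a))_n$ is uniformly norm-bounded, so sending $a$ to its class in the ultraproduct defines a $*$-homomorphism $\tilde\iota\colon\bbC\bF_S\to\prod_{n\to\omega}(B(\ell^2\vertex_n),\tr_{\vertex_n})$. Applying Lemma \ref{lem:sofic} to $a^*a$ yields $\|\tilde\iota(a)\|_{2,\omega}^2=\tau(\rho_G(a^*a))=\|\rho(\pi(a))\|_2^2$, so $\tilde\iota$ vanishes on $\ker\pi$ and descends to a trace-preserving $*$-homomorphism $\iota_0\colon\rho(\bbC\Gamma)\to\prod_{n\to\omega}(B(\ell^2\vertex_n),\tr_{\vertex_n})$. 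Since $\rho(\bbC\Gamma)$ is ultra-weakly (equivalently, on bounded sets, $\|\cdot\|_2$-) dense in $R(\Gamma)$, the extension principle yields a unique trace-preserving extension $\iota\colon R(\Gamma)\to\prod_{n\to\omega}(B(\ell^2\vertex_n),\tr_{\vertex_n})$; trace preservation together with faithfulness of $\tau$ forces $\iota$ to be injective. For $\iota_S$ I would invoke Lemma \ref{block} to identify both $R(\Gamma,S)=M_S(R(\Gamma))$ and $B(\ell^2\edge_n)=M_S(B(\ell^2\vertex_n))$, then define $\iota_S$ by applying $\iota$ entrywise; the identity $\tau_S=\sum_{s\in S}\tau(p_s\cdot p_s)$ makes the resulting map trace preserving.

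To prove \eqref{eq1} I would first handle monomials $T=\rho_G(w)$ with $w\in\bF_S$ by taking $T_n:=\rho_{G_n}(w)$. Both $\langle\rho_{G_n}(w)\delta_{v.\gamma},\delta_{v.\gamma'}\rangle$ and $\langle\rho_G(w)\delta_{o.\gamma},\delta_{o.\gamma'}\rangle$ are indicators of whether $\gamma w^{-1}\gamma'^{-1}\in\bF_S$ fixes $v$ in $G_n$ (respectively, of whether $\pi(\gamma w^{-1}\gamma'^{-1})=o$ in $\Gamma$), so \eqref{eq1} reduces to Lemma \ref{lem:sofic} applied to the single word $\gamma w^{-1}\gamma'^{-1}$. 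Linearity extends this to all $T\in\rho(\bbC\Gamma)$. For general $T\in R(\Gamma)$ and $\epsilon>0$, pick $a\in\bbC\bF_S$ with $\|T-\rho_G(a)\|_2<\epsilon$. Two estimates drive the approximation: Cauchy--Schwarz together with the bijectivity of $v\mapsto v.\gamma$ on $\vertex_n$ in any Schreier graph gives
\begin{equation*}
\frac{1}{|\vertex_n|}\sum_{v\in\vertex_n}|\langle U\delta_{v.\gamma},\delta_{v.\gamma'}\rangle|\le\|U\|_{2,\vertex_n}\qquad(U\in B(\ell^2\vertex_n)),
\end{equation*}
and $\lambda$-equivariance of elements of $R(\Gamma)$ yields
\begin{equation*}
|\langle T'\delta_{o.\gamma},\delta_{o.\gamma'}\rangle|\le\|T'\delta_{o.\gamma}\|=\|T'\delta_o\|=\|T'\|_2\qquad(T'\in R(\Gamma)).
\end{equation*}
The triangle inequality then bounds the $L^1$ quantity in \eqref{eq1} by its analogue for $\rho_{G_n}(a)$ (which tends to $0$ by the monomial case), plus $\|T_n-\rho_{G_n}(a)\|_{2,\vertex_n}$ (whose $\omega$-limit equals $\|T-\rho_G(a)\|_2<\epsilon$ by the $L^2$-isometry of $\iota$), plus $|\langle(T-\rho_G(a))\delta_{o.\gamma},\delta_{o.\gamma'}\rangle|\le\|T-\rho_G(a)\|_2<\epsilon$. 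Since $\epsilon$ was arbitrary, \eqref{eq1} follows. Equation \eqref{eq1b} is proved identically after replacing $\vertex_n$ by $\edge_n$ and invoking the matrix-block description of Lemma \ref{block}.

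I expect the main obstacle to lie in the approximation step of the second paragraph. Since $\rho_G(\bbC\bF_S)$ is only $\|\cdot\|_2$-dense in $R(\Gamma)$ (not norm-dense), individual matrix entries cannot be controlled uniformly; the key observation is that the $L^1$-average over $v$ appearing in \eqref{eq1} is automatically dominated by the $\|\cdot\|_2$-norm, thanks to $\lambda$-equivariance on the Cayley graph side and the free action of $\bF_S$ by permutations on $\vertex_n$ on the Schreier graph side.
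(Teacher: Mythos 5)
Your proposal is correct, and while the construction of $\iota$ and $\iota_S$ is the same as in the paper (ultraproduct of the $\rho_{G_n}$, descent to $\bbC\Gamma$ via Lemma \ref{lem:sofic}, extension by weak density, and passage to $M_S(\cdot)$ for the edge version), your proof of \eqref{eq1} runs on a genuinely different engine. The paper first shows that the validity of \eqref{eq1} depends only on the class $\iota(T)$ and not on the representing sequence, which it does by treating the case $T=0$ via a decomposition $T_n=\sum_k i^k T_n^{(k)}$ into positive parts (for which the averaged diagonal entries equal traces); it then approximates a general $T$ by a WOT-convergent, norm-bounded sequence from $\bbC\Gamma$ (Kaplansky) and extracts a diagonal sequence $(S_{k_n,n})_n$ growing ``slowly enough.'' Your argument replaces all of this with the single quantitative estimate
$\frac{1}{|\vertex_n|}\sum_{v}|\iprod{U\delta_{v.\gamma},\delta_{v.\gamma'}}|\le\|U\|_{2,\vertex_n}$,
obtained from Cauchy--Schwarz and the injectivity of $v\mapsto(v.\gamma,v.\gamma')$, paired with the matching bound $|\iprod{T'\delta_{o.\gamma},\delta_{o.\gamma'}}|\le\|T'\|_2$ from $\lambda$-equivariance; this simultaneously handles the ``independence of the representing sequence'' (any sequence representing $0$ has $\|\cdot\|_{2,\vertex_n}\to 0$) and the passage from $\bbC\Gamma$ to $R(\Gamma)$ via $\|\cdot\|_2$-density, with an explicit $2\epsilon$ error rather than a diagonalization. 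Your version is arguably cleaner and more effective (it gives a rate in terms of $\|T-\rho_G(a)\|_2$), at the cost of being specific to the $L^1$-averaged form of \eqref{eq1}; the paper's positive-parts reduction is the more robust template when only trace convergence is available. Both treatments of \eqref{eq1b} defer to the block-matrix identification, as does the paper.
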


Embedding theorems like the preceding proposition were first proved by
Elek and Szab\'o; see, for example, \cite[Theorem 2]{elekszabo}. However, our
emphasis is on the special features of embeddings coming from a sofic
approximation. In particular, we are interested in Equations \eqref{eq1} and
\eqref{eq1b}, which are not just features of every trace-preserving
embedding.

\begin{proof}[Proof of Proposition \ref{emb}:]
For each $n \in \bbN$, we consider the $*$-homomorphism $$\rho_{G_n}
\colon \bbC \bF_S \to (B(\ell^2 \vertex_n),{\rm
tr_{\vertex_n}})\,.$$ This sequence induces a $*$-homomorphism
$$\rho_\omega \colon \bbC \bF_S \to \prod_{n \to \omega}
(B(\ell^2 \vertex_n),\tr_{\vertex_n})\,.$$ We denote the canonical
trace on $\prod_{n \to \omega} (B(\ell^2 \vertex_n),{\rm
tr}_{\vertex_n})$ by $\tr_{\omega} := \lim_{n \to \omega} {\rm
tr}_{\vertex_n}$. Since $(G_n)_n$ is a sofic approximation to $\Gamma$, we
obtain
$$\tau_{\omega}(\rho_\omega(w)) = \begin{cases} 1 & \mbox{if }\pi(w)=e \mbox{ in
$\Gamma$}, \\
0 & \mbox{if }\pi(w)\neq e \mbox{ in $\Gamma$}. \end{cases}$$
As the result depends only on $\pi(w)$,
this shows that $\rho_\omega$ descends to a $*$-homomorphism
$\rho_\omega \colon \bbC \Gamma \to \prod_{n \to \omega} (B(\ell^2
\vertex_n),\tr_{\vertex_n})$ that preserves the canonical trace on
$\bbC \Gamma$. Note also that $\rho(\bbC \Gamma) \subset R(\Gamma)$ is
weakly dense. Indeed, this is a standard fact and follows, for example,
from the Commutation Theorem \cite[Theorem 1 on page 80]{Dixmier}. If there
is no risk of confusion, we shall sometimes identify $\bbC\Gamma$ with its image in $R(\Gamma)$. 
It is a standard fact---see the remarks before Definition
\ref{dfn:ultra}---that $\rho_\omega$ has an extension $\iota$ to the group von Neumann $R(\Gamma)$ of $\Gamma$.

The second embedding is obtained after passing to matrix algebras
$M_S(\cbuldot)$ via the natural isomorphisms $M_S R(\Gamma) = R(\Gamma,S)$ and $M_S(B(\ell^2 \vertex_n)) = B(\ell^2 \edge)$. 

We shall now prove Equation \eqref{eq1}. Upon replacing $T$ by
$\rho_G(\gamma')T \rho_G(\gamma^{-1})$ and $T_n$ by
$\rho_{G_n}(\gamma')T_n \rho_{G_n}(\gamma^{-1})$, we may assume that
$\gamma=\gamma'=o$. We first study the case $T=0$. We need to show that 
\begin{equation} \label{eq2}
\lim_{n \to \omega} |\vertex_n|^{-1} \sum_{v \in \vertex_n}
|\iprod{T_n \delta_{v}, \delta_{v}}| = 0.
\end{equation}
We may write $T_n := \sum_{k=0}^3 i^k T^{(k)}_n$ with $T_n^{(k)}$ positive
for each $0 \leq k \leq 3$ and $n \in \bbN$. Here, e.g., $T_n^{(0)} -
T_n^{(2)} = \Re (T_n) := (T_n + T_n^*)/2$ are self-adjoint operators that
form a sequence representing $(T + T^*)/2 = 0$, and $T_n^{(0)} = (\Re
T_n)_+$ is a function of $\Re T_n$.
By Lemma \ref{lem:app} and the remarks preceding it, each
sequence $(T_n^{(k)})_n$ represents zero in the ultraproduct. Hence,
Equation \eqref{eq2} holds for $(T_n^{(k)})_n$ for $0 \leq k \leq 3$
because all scalar products are positive already, and hence \eqref{eq2}
holds for $(T_n)_n$
by the triangle inequality.

We conclude again from the triangle inequality that if Equation \eqref{eq1} holds for one sequence $(T_n)_n$ representing some operator $\iota(T)$ in the ultraproduct, then it holds for every sequence representing $\iota(T)$. Thus, whether or not Equation \eqref{eq1} holds is a property of $\iota(T) \in \prod_{n \to \omega} (B(\ell^2 \vertex_n),\tr_{\vertex_n})$ alone.

It is clear that Equation \eqref{eq1} holds for $\iota(\rho(\gamma))$ for
$\gamma \in \Gamma$, as follows directly from the fact that the limit of
$(G_n)_n$ is $G$. It
is also clear that the set of operators $T \in R(\Gamma)$ for which
Equation \eqref{eq1} holds is closed under addition and multiplication by
scalars. Thus, Equation \eqref{eq1} holds for the complex group ring
$\bbC \Gamma \subset R(\Gamma)$ and every ultrafilter. Moreover, our
construction shows that for each $T \in \bbC \Gamma$, there exists a
sequence $(T_n)_n \in \ell^{\infty}( \bbN, (B(\ell^2 \vertex_n),{\rm
tr_{\vertex_n}}))$ such that
$$\lim_{n \to \infty} |\vertex_n|^{-1} \sum_{v \in \vertex_n}
|\iprod{T_n \delta_{v}, \delta_{v}} - \iprod{T
\delta_o, \delta_o}| = 0
\,.$$
Moreover, $(T_n)_n$ represents $\iota(T)$ in the ultraproduct with respect to any ultrafilter.
Now, let $T \in R(\Gamma)$ and let us choose a sequence $(S_i)_i$ with
$S_i \in \bbC \Gamma$, $S_i \to T$ in the weak operator topology and $\sup_i \|S_i\| \leq
\|T\|$. For each $i$, let $(S_{i,n})_n$ be a sequence that represents
$\iota(S_i)$ in the ultraproduct. It is now clear that if the sequence
$(k_n)_n$ of natural numbers increases slowly enough, then $(S_{k_n,n})_n$
represents $\iota(T)$ in the ultraproduct and that Equation \eqref{eq1}
holds for this sequence. Since we have shown that Equation \eqref{eq1}
depends only on $\iota(T)$, it follows that Equation \eqref{eq1} holds for
any sequence $(T_n)_n$ that represents $\iota(T)$ in the ultraproduct.
This proves Equation \eqref{eq1} for every $T \in R(\Gamma)$.
The proof of the second equality is similar. This finishes the proof of the proposition.
\end{proof}



The following lemma shows that our previous result is useful to connect the approximation of some operator $\iota(T)$ by a sequence $(T_n)_n$ to an approximation of the associated determinantal probability measures.

\procl l.ultradetl
Let $\gp$ be a sofic group and $S$ be a finite generating set of elements
in $\gp$. Let $(G_n)_n$ be a sequence of finite $S$-labelled Schreier
graphs whose limit is $G:={\rm Cay}(\gp,S)$. Let $\iota$ and $\iota_S$
be trace-preserving embeddings as in Proposition \ref{emb}.
Let $T \in R\gp$ be such that $0 \leq T \leq I$ and suppose that $(T_n)_n$
represents $\iota(T)$ in the ultraproduct $\prod_{n \to \omega}
(B(\ell^2 \vertex_n),\tr_{\vertex_n})$ with $0 \leq T_n \leq I$ for
each $n \in \bbN$.
Then $\lim_{n \to \omega} \P^{T_n} = \P^T$ in the random weak topology,
in other words,
$$
\lim_{n \to \omega}
\frac{1}{|\vertex_n| }\sum_{v \in \vertex_n} 
\P^{T_n}[v.A \subset \qba]
=
\P^{T}[o.A \subset \qba]
$$
for all finite $A \subset \bF_S$.
Likewise, 
if $T \in R(\gp, S)$ is such that $0 \leq T \leq I$ and $(T_n)_n$
represents $\iota_S(T)$ in the ultraproduct $\prod_{n \to \omega}
(B(\ell^2 \edge_n),\tr_{\edge_n})$ with $0 \leq T_n \leq I$ for
each $n \in \bbN$,
then $\lim_{n \to \omega} \P^{T_n} = \P^T$ in the random weak topology;
in other words, given a finite set $A_s \subset {\mathbf F_S}$ for each
$s \in  S$, we have
$$
\lim_{n \to \omega}
\frac{1}{|\vertex_n| }\sum_{v \in \vertex_n} 
\P^{T_n}\Big[\bigcup_{s \in S} (v, v.s).A_s \subset \qba\Big]
=
\P^{T}\Big[\bigcup_{s \in S} (o, s).A_s \subset \qba\Big]
\,.
$$
\endprocl

\rproof
We prove the first statement, as the second is similar and just involves
more notation.

By definition of determinantal probability measures, the desired identity
is the same as
$$
\lim_{n \to \omega}
\frac{1}{|\vertex_n| }\sum_{v \in \vertex_n} 
\det (T_n \restrict v.A)
=
\det (T \restrict o.A)
\,.
$$
Because $G$ is the limit of $(G_n)_n$, we may assume that $|o.A| = |A|$,
i.e., that $\pi$ is injective on $A$.
Define 
$$
a_n(v, \gpe, \gpe') 
:=
|\iprod{T_n \delta_{v.\gamma}, \delta_{v.\gamma'}} - \iprod{T
\delta_\gpe, \delta_{\gpe'}}| 
\,.
$$
By Equation (\ref{eq1}), we have 
$$
\lim_{n \to \omega}
\frac{1}{|\vertex_n| }\sum_{v \in \vertex_n} 
\sum_{\gpe, \gpe' \in A}
a_n(v, \gpe, \gpe')
=
0
\,.
$$
Since $a_n(v, \gpe, \gpe') \le 2$, it follows that
$$
\lim_{n \to \omega}
\frac{1}{|\vertex_n| }\sum_{v \in \vertex_n} 
\prod_{\gpe' \in A} \sum_{\gpe \in A}
a_n(v, \gpe, \gpe')
=
0
\,,
$$
whence by Hadamard's inequality, 
$$
\lim_{n \to \omega}
\frac{1}{|\vertex_n| }\sum_{v \in \vertex_n} 
|\det (T_n \restrict v.A)
-
\det (T \restrict o.A)|
=
0
\,,
$$
which is stronger than the limit we desired.
\Qed

The following lemma is well known \cite{Douglas}, but we provide a proof
for the convenience of the reader, as it is an essential tool for our
proofs.

\begin{lem} \label{prev}
Let $H$ be a Hilbert space, $M \subseteq B(H)$ be a von Neumann algebra, and $S,T \in M$ with $0 \leq S \leq T \leq I$. Then there exists a positive contraction $C \in M$ with $T^{1/2}CT^{1/2}=S$.
\end{lem}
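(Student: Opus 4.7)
The plan is to construct $C$ by a standard regularization. For $\varepsilon > 0$, set
$$
C_\varepsilon := (T+\varepsilon I)^{-1/2}\, S\, (T+\varepsilon I)^{-1/2}\,.
$$
Since $T \in M$ is positive and the function $t\mapsto (t+\varepsilon)^{-1/2}$ is continuous on $[0,\|T\|]$, the operator $(T+\varepsilon I)^{-1/2}$ belongs to $M$ by continuous functional calculus, and hence so does $C_\varepsilon$. Clearly $C_\varepsilon \ge 0$; and using $S\le T\le T+\varepsilon I$, one checks that for every $x\in H$, $\langle C_\varepsilon x,x\rangle = \langle S(T+\varepsilon I)^{-1/2}x,(T+\varepsilon I)^{-1/2}x\rangle \le \langle(T+\varepsilon I)(T+\varepsilon I)^{-1/2}x,(T+\varepsilon I)^{-1/2}x\rangle = \|x\|^2$, so $0\le C_\varepsilon \le I$.

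Next I would extract a limit. Since the unit ball of $M$ is compact in the weak operator topology (WOT) and $M$ is WOT-closed in $B(H)$, I would pick any WOT limit point $C\in M$ of $C_\varepsilon$ as $\varepsilon\to 0^+$ (equivalently, a WOT limit along some sequence $\varepsilon_n \downarrow 0$). Then $0\le C\le I$ is preserved under WOT limits of self-adjoint contractions, so $C$ is a positive contraction in $M$.

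The last and main step is to verify $T^{1/2}CT^{1/2}=S$. I would show $T^{1/2}C_\varepsilon T^{1/2} \to S$ in the strong operator topology, which, combined with WOT-continuity of left and right multiplication by the fixed bounded operator $T^{1/2}$, yields the identity in the limit. For this, note
$$
T^{1/2}C_\varepsilon T^{1/2} = \bigl[T^{1/2}(T+\varepsilon I)^{-1/2}\bigr]\, S\, \bigl[(T+\varepsilon I)^{-1/2}T^{1/2}\bigr]\,,
$$
and by the functional calculus applied to the bounded continuous functions $t\mapsto t^{1/2}(t+\varepsilon)^{-1/2}$ (which increase pointwise to $\mathbf{1}_{(0,\infty)}(t)$), the bracketed factors converge strongly to the projection $P$ onto $\overline{\operatorname{ran}(T^{1/2})} = (\ker T)^\perp$. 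Thus $T^{1/2}C_\varepsilon T^{1/2} \to PSP$ strongly.

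The key observation to finish is that $PSP = S$. This follows because $0\le S\le T$ forces $\ker T\subseteq \ker S$: if $Tx=0$, then $0\le\langle Sx,x\rangle\le\langle Tx,x\rangle=0$, so $S^{1/2}x=0$ and hence $Sx=0$. Therefore $S$ vanishes on $\ker T$ and has range in $(\ker T)^\perp$, giving $SP=S=PS$, so $PSP=S$. The only subtlety—really the only place one must be careful—is keeping the WOT limit point $C$ and the SOT limit of $T^{1/2}C_\varepsilon T^{1/2}$ consistent; this is handled by noting that WOT convergence of $C_\varepsilon\to C$ implies WOT convergence of $T^{1/2}C_\varepsilon T^{1/2}\to T^{1/2}CT^{1/2}$, and the SOT limit above is in particular a WOT limit, so the two limits agree and equal $S$.
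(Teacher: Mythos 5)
Your argument is correct, but it takes a genuinely different route from the paper's. The paper follows Douglas's factorization scheme: after reducing to the case where the range of $T$ is dense, it defines a densely defined operator $D$ by $D(T^{1/2}\xi) := S^{1/2}\xi$, checks via $0 \le S \le T$ that $D$ is well defined and contractive, proves $D \in M$ by showing that $D$ commutes with every $U \in M'$ on the dense range of $T^{1/2}$ and invoking the Double Commutant Theorem, and finally sets $C := D^*D$, so that $T^{1/2}CT^{1/2} = (DT^{1/2})^*(DT^{1/2}) = S^{1/2}S^{1/2} = S$. Your regularization $C_\varepsilon = (T+\varepsilon I)^{-1/2}S(T+\varepsilon I)^{-1/2}$ trades that construction for a compactness argument: membership in $M$ comes for free from the continuous functional calculus together with the WOT-compactness of the unit ball of $M$, and both the densely-defined-operator bookkeeping and the commutant computation disappear. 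The price is non-constructiveness --- you only obtain a WOT limit point, and for non-separable $H$ you should extract it along a subnet rather than a sequence, since the weak operator topology on the ball need not be metrizable --- together with the need to reconcile the WOT limit of $C_\varepsilon$ with the SOT limit of $T^{1/2}C_\varepsilon T^{1/2}$, which you handle correctly via $\ker T \subseteq \ker S$ and $PSP = S$. Both proofs are complete; the paper's has the mild advantage of exhibiting $C$ explicitly as $D^*D$.
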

\begin{proof}
Let $H_0 \subset H$ be the closure of the image of $T$ and note that $H_0 =
\ker(T)^{\perp}$. Since the orthogonal projection onto $H_0$ is contained in $M$, we may assume without loss of generality that $H_0=H$. The
space $H_0':=\{T^{1/2}\xi \st \xi \in H \}$ is dense in $H_0$. We define
$D$ to be $D(T^{1/2}\xi) := S^{1/2}\xi$ on $H_0'$. Now, since $0 \leq S
\leq T$, it is easy to see that $D$ is well defined and extends to a
contraction. Indeed, $$\|D(T^{1/2}\xi)\|^2 = \langle
D(T^{1/2}\xi),D(T^{1/2}\xi) \rangle =\langle S^{1/2}\xi, S^{1/2}\xi \rangle
= \langle S \xi,\xi \rangle \leq \langle T \xi,\xi \rangle = \|T^{1/2}
\xi\|^2\,.$$
The inequality shows that $D$ is well defined; the whole
shows that $D$ is contractive on $H_0'$ and hence has a unique contractive
extension to $H$. It is also clear that $DT^{1/2} = S^{1/2}$ on $H$. For every operator $U \in M'$, we get
$$UDT^{1/2} = US^{1/2} = S^{1/2}U = DT^{1/2}U = DUT^{1/2}.$$ Hence, $UD=DU$ on the image of $T^{1/2}$, which we assume is dense in $H$. This implies $D \in M''$ and hence $D \in M$, since $M$ is a von Neumann algebra.
We can now set $C:=D^*D \geq 0$, note that $C \in M$, and that we have $T^{1/2} CT^{1/2} = T^{1/2} D^*DT^{1/2} = S^{1/2} S^{1/2} = S$. It is obvious that $C$ is also a contraction. This finishes the proof of the lemma.
\end{proof}

The previous lemma can now be used to show that self-adjoint operators $S,T$ in the ultra-product with $0\leq S \leq T \leq I$ can be approximated by sequences satisfying the same relation.

\begin{lem} \label{getlim}
Let $(M_n,\tau_n)$ be a sequence of tracial von Neumann algebras. Let $S,T
\in \prod_{n \to \omega} (M_{n},\tau_{n})$ be operators such that $0 \leq S
\leq T \leq I$. Then there exist sequences $(T_n)_n$ and $(S_n)_n$ with
$T_n,S_n \in M_n$ that represent $T$ and $S$ in the ultraproduct and
such that $0 \leq S_n \leq T_n \leq I$ for each $n \in \bbN$.
\end{lem}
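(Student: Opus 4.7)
The plan is to first adjust representing sequences to satisfy the bound $0 \leq \cdot \leq I$ pointwise, and then use Douglas's lemma (Lemma \ref{prev}) in the ultraproduct to factor $S$ through $T$, and lift that factorization back to the component algebras in a way that preserves the order.

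More precisely, I would start by picking any representing sequence $(T'_n)_n$ for $T$. Let $f\colon\mathbb{R}\to[0,1]$ be the continuous function that is $0$ on $(-\infty,0]$, the identity on $[0,1]$, and $1$ on $[1,\infty)$. Since $0\leq T\leq I$, we have $f(T)=T$, while $0\leq f(T'_n)\leq I$ for every $n$. By Lemma \ref{lem:app} applied to $\tfrac12(T'_n+(T'_n)^*)$ (whose limit is the self-adjoint operator $T$) and the continuous function $f$, the sequence $T_n:=f\bigl(\tfrac12(T'_n+(T'_n)^*)\bigr)$ still represents $T$ in the ultraproduct and satisfies $0\leq T_n\leq I$ for all $n$. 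This handles $T$.

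Now apply Lemma \ref{prev} inside the ultraproduct $\prod_{n\to\omega}(M_n,\tau_n)$, which is itself a von Neumann algebra: there exists a positive contraction $C$ in the ultraproduct with $T^{1/2}CT^{1/2}=S$. Pick any representing sequence $(C'_n)_n$ for $C$ and, exactly as above, replace it by $C_n:=f\bigl(\tfrac12(C'_n+(C'_n)^*)\bigr)$, so $(C_n)_n$ represents $C$ with $0\leq C_n\leq I$ for every $n$. Finally, define
\[
S_n := T_n^{1/2}\, C_n\, T_n^{1/2}\in M_n.
\]
Since $0\leq C_n\leq I$, sandwiching gives $0\leq S_n\leq T_n^{1/2}\cdot I\cdot T_n^{1/2}=T_n\leq I$ on the nose for every $n$, which is the order condition we want.

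It remains to verify that $(S_n)_n$ represents $S$ in the ultraproduct. By Lemma \ref{lem:app} applied to the continuous function $x\mapsto\sqrt{\max(x,0)}$, the sequence $(T_n^{1/2})_n$ represents $T^{1/2}$. Because $J_\omega$ is a two-sided ideal in $\ell^\infty(\mathbb{N},(M_n,\tau_n)_n)$, products of representing sequences represent the product of their limits; hence $(T_n^{1/2}C_nT_n^{1/2})_n$ represents $T^{1/2}CT^{1/2}=S$. The only subtlety, and the only step that required care, was arranging $S_n\leq T_n$ pointwise rather than merely in the limit; the key idea that makes this work is using Douglas's lemma in the ultraproduct and transporting the factorization $S=T^{1/2}CT^{1/2}$ back to the components via the same conjugation formula.
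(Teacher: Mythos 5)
Your proof is correct and follows essentially the same route as the paper: both invoke Douglas's lemma (Lemma \ref{prev}) in the ultraproduct to write $S = T^{1/2} C T^{1/2}$, normalize representing sequences for $T$ and $C$ so that they are positive contractions termwise, and then set $S_n := T_n^{1/2} C_n T_n^{1/2}$. The only (cosmetic) difference is in the normalization step: you truncate via continuous functional calculus with a clipping function applied to the real part, whereas the paper replaces $T_n$ by $(T_n^* T_n)^{1/2}$ and then multiplies by the spectral projection onto $[0,1]$; both variants rest on Lemma \ref{lem:app}.
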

\begin{proof}
First of all, by Lemma \ref{prev} there exists a positive contraction $C\in \prod_{n \to \omega} (M_{n},\tau_{n})$ such that $T^{1/2}C T^{1/2} = S$.
Let $(T_n)_n$ be some representative of $T$. By Lemma \ref{lem:app},
$\big((T_n^* T_n)^{1/2}\big)_n$ represents $(T^* T)^{1/2} = T$, so by
replacing $T_n$ with
$(T_n^*T_n)^{1/2}$, we may assume
that $T_n \geq 0$. Let $\epsilon_n := \mu_{T_n}\big((1,\infty)\big)$, where $\mu_{T_n}$
denotes the spectral measure of $T_n$. Since $T \leq I$, we
know that $\epsilon_n \to 0$ as $n \to \infty$. Let $Q_n$ be the spectral projection onto the
interval $[0,1]$; thus, $\tau_n(Q_n) = 1 - \epsilon_n \to 1$ as $n \to
\infty$. This implies that $(Q_n)_n$ represents $I$ in the ultraproduct.
Hence, we may replace $T_n$ by $Q_n T_n$ to obtain a sequence $(T_n)_n$
that represents $T$ and satisfies $0 \leq T_n \leq I$ for all $n \in
\bbN$. The same argument applies to $0 \leq C \leq I$ and we obtain a
sequence $(C_n)_n$ representing $C$ such that $0 \leq C_n \leq I$ for all $n \in \bbN$. Moreover, the sequence $(S_n)_n$ with $S_n:=T_n^{1/2} C_n T_n^{1/2}$ represents $S=T^{1/2}CT^{1/2} $. Now
$0 \leq C_n \leq I$ implies $0 \leq S_n \leq T_n$. This finishes the proof.
\end{proof}

\bsection{Existence of invariant monotone couplings}{s.existence}

We shall now use the previous results to show the existence of certain
$\Gamma$-invariant couplings between $\Gamma$-invariant determinantal
measures on $\Gamma$ itself, as well as on the edge set of any Cayley graph
of $\Gamma$. Recall that a positive contraction $Q$ in $R(\Gamma)$ leads to
a $\Gamma$-invariant determinantal measure on $\Gamma$, which we denote by
$\P^Q$. Note also that
$\P^{Q_2}$ stochastically dominates $\P^{Q_1}$ if $0 \leq Q_1
\leq Q_2 \leq I$. This is stated in \rref t.dominate/ in the finite case
and is shown to imply the same for the infinite case in \rref b.Lyons:det/.

\procl t.gencouple
Let $\gp$ be a sofic group and $S$ be a finite generating set of elements
in $\gp$. 
If $0 \le Q_1 \le Q_2 \le I$ in $R(\gp)$ or in $R(\gp, S)$, then there exists
a $\gp$-invariant (sofic) monotone coupling of\/ $\P^{Q_1}$ and $\P^{Q_2}$.
\endprocl

\rproof
Let $(G_n)_n$ be a sequence of finite $S$-labelled Schreier graphs whose
limit is $G:={\rm Cay}(\gp,S)$.
We prove the theorem for operators in $R(\gp)$, as the other case is
essentially identical.
By Proposition \ref{emb} and Lemma \ref{getlim}, there exist $0 \le S_n \le
T_n \le I$ in $B(\ell^2(\verts_n))$ so that $(S_n)_n$ and $(T_n)_n$
represent $\iota(Q_1)$ and $\iota(Q_2)$ in the ultraproduct.
Let $\mu_n$ be a monotone coupling of $\P^{S_n}$ with $\P^{T_n}$, as obtained from Strassen's theorem \cite{Strassen}. 
As explained in \cite[Example 10.3]{AL:urn}, the random weak limit of
$(\mu_n)_n$ (perhaps after
taking a subsequence) is a $\gp$-invariant coupling of $\P^{Q_1}$ and
$\P^{Q_2}$, which is necessarily monotone. However, we give another proof
in the framework we are using here.

Note that
$\mu_n$ is a probability measure on $2^{\vertex_n} \times 2^{\vertex_n}$.
In general, given a set $V$, let $V'$ be a disjoint copy of $V$ with
bijection $\phi \colon V \to V'$ and
identify elements of $2^V \times 2^V$ with subsets of $V \cup V'$.
Thus, elementary cylinder events are identified with events of the form
$\mathcal A = \{A \subset V \cup V' \st A \supset A_1,\, A \cap A_2 =
\varnothing\}$ for finite $A_1, A_2 \subset V \cup V'$.
Let $A_1,A_2 \subset \gp \cup \gp'$.
Let $\mathcal A := \{A \subset \gp \cup \gp' \st A \supset A_1,\, A \cap
A_2 = \varnothing\}$.
Let $\sigma \colon \Gamma \to \bF_S$ be a section of the natural
surjection $\pi \colon \bF_S \to \Gamma$.  
For $v \in \vertex_n$, write
$$
A_{i,n,v} := v.\sigma(A_i \cap \gp) \cup \phi(v.\sigma(A_i \cap \phi(\gp)))
\subset \vertex_n \cup \vertex_n'
$$
for $i \in \{1,2\}$ and define
$$\mathcal A_{v,n}:=
\left\{ A \in 2^{\vertex_n} \times 2^{\vertex_n} \st A_{1,n,v} \subset A,
A_{2,n,v}  \cap A = \varnothing \right\} \subset 2^{\vertex_n} \times
2^{\vertex_n}\,.$$
We define
$$\tilde \mu(\mathcal A) := \lim_{n \to \omega} \frac1{|\vertex_n|}\sum_{v
\in \vertex_n}\mu_n(\mathcal A_{v,n})\,.$$
Since ultralimits are finitely additive, this extends to define
a consistent measure on cylinder events, whence by Kolmogorov's Extension
Theorem, 
there exists a unique measure $\mu$ on the Borel $\sigma$-algebra of
$2^{\gp} \times 2^{\gp}$ that extends $\tilde \mu$. We claim that
$\mu$ is the desired coupling.
It is a basic property of the formula
in the definition of $\tilde \mu$ that the action of $\bF_S$ on
$G_n$ just permutes the summands of the right-hand side. Hence, we conclude
that $\tilde \mu$ is $\Gamma$-invariant. Uniqueness of the extension in
Kolmogorov's Extension Theorem implies that $\mu$ is also
$\Gamma$-invariant. It follows from \rref l.ultradetl/
that the marginals of
$\mu$ are just $\P^{Q_1}$ and $\P^{Q_2}$. Finally, it is a monotone
coupling because each $\mu_n$ is monotone.
This finishes the proof.
\end{proof}

As a special case, we obtain the following:

\procl c.sfcouple
Let $\gp$ be a sofic group and $S$ be a finite generating set of elements
in $\gp$. 
There exists a $\Gamma$-invariant monotone coupling between $\wsf_G \dom
\fsf_G$ on the associated Cayley graph.
\endprocl

\rproof
It remains only to pass from the Cayley diagram to the Cayley graph.
Every edge of the Cayley graph is doubled in the diagram.
However, recall that in order to define the spanning forest measures, one
must first choose an orientation for each unoriented edge.
Thus, we may simply choose one of each pair in the diagram to be the
orientation of the corresponding unoriented edge. If we ignore the other
edge, then the corresponding determinantal probability measures are
precisely the ones we want when we identify each chosen oriented edge with
its corresponding unoriented edge. 
\Qed

The preceding corollary was proven by Bowen for residually amenable groups \cite{bowen}. Elek and Szab\'o \cite{elekszabo2} gave examples of finitely generated sofic groups that are not residually amenable. Later, Cornulier gave the first examples of finitely presented groups that are sofic but not residually amenable (or even limits of amenable groups) \cite[Corollary 3]{cornulier}. 

Similar reasoning shows, e.g., that if $0 \le Q_1 \le \cdots \le Q_r \le I$
in $R(\Gamma)$, then there exists a $\Gamma$-invariant sofic coupling of
all $\P^{Q_i}$ simultaneously that is monotone for each successive pair
$i$, $i+1$.

\bsection{Free uniform spanning forest measures as limits over a sofic
approximation}{s.approxim}

For certain determinantal measures we can say more. Let us first
introduce some more notation.
Note that for $d \in \bbN$, the embedding $(R(\Gamma),\tau) \subset
\prod_{n \to \omega} (B(\ell^2 \vertex_n),\tr_{\vertex_n})$ gives
rise to an embedding $$(M_dR(\Gamma),\tau^{(d)}) \subset \prod_{n \to \omega}
(M_dB(\ell^2 \vertex_n), \tr^{(d)}_{\vertex_n})\,,$$ where ${\rm
tr}^{(d)}_{\vertex_n}$ denotes the natural extension of the trace ${\rm
tr}_{\vertex_n}$ to $M_dB(\ell^2 \vertex_n)$. Similarly, we denote the natural
extension of $\tau$ to $M_d R(\Gamma)$ by $\tau^{(d)}$. 
Let $G=(\vertex,\edge)$ be an $S$-labelled Schreier graph. We shall
consider the natural extension of $\rho_{G}$ (defined after \eqref{defpi})
to $$\rho_G^{(d)} \colon M_d
\bbC \bF_S \to M_d B(\ell^2 \vertex)\,.$$

The following theorem is a variant of L\"uck's approximation theorem. In
the generality that we need, it was first proved by Elek and Szab\'o in
\cite[Proposition 6.1(a)]{elekszabo}. 
Let us state what we need in our notation.

\begin{thm}[Elek-Szab\'o] \label{eleksz}
Let $\gp$ be a sofic group and $S$ be a finite generating set of elements
in $\gp$. Let $(G_n)_n$ be a sequence of finite $S$-labelled Schreier
graphs whose limit is $G:={\rm Cay}(\gp,S)$. Let $d \in \bbN$ and $a \in
M_d \Z \bF_S$. Let $P_n \in M_d B(\ell^2 \vertex_n)$ denote the
projection onto the kernel of $\rho^{(d)}_{G_n}(a)$ and $P \in M_d R
\Gamma$ denote the projection onto the kernel of $\rho^{(d)}_G(a)$. Then
$$\lim_{n \to\infty} \tr^{(d)}_{\vertex_n}(P_n) = \tau^{(d)}(P)\,.$$
\end{thm}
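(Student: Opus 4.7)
The plan is to reduce the assertion to a statement about weak convergence of spectral measures of finite matrices and then exploit the integrality hypothesis $a \in M_d \Z \bF_S$ in the style of L\"uck's original approximation theorem. Set $b := \rho_G^{(d)}(a)^* \rho_G^{(d)}(a) \in M_d R(\gp)$ and $b_n := \rho_{G_n}^{(d)}(a)^* \rho_{G_n}^{(d)}(a) \in M_d B(\ell^2 \vertex_n)$; since $\ker \rho_G^{(d)}(a) = \ker b$ and $\ker \rho_{G_n}^{(d)}(a) = \ker b_n$, writing $\mu_b$ and $\mu_{b_n}$ for the spectral measures of $b$ and $b_n$ with respect to $\tau^{(d)}$ and $\tr^{(d)}_{\vertex_n}$, one has $\tau^{(d)}(P) = \mu_b(\{0\})$ and $\tr^{(d)}_{\vertex_n}(P_n) = \mu_{b_n}(\{0\})$. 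The claim becomes $\mu_{b_n}(\{0\}) \to \mu_b(\{0\})$.

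First I would establish $\mu_{b_n} \to \mu_b$ weakly. The element $a^*a$ is a fixed element of $M_d \bbC \bF_S$, and $b_n^k = \rho_{G_n}^{(d)}\bigl((a^*a)^k\bigr)$, so applying the obvious $M_d$-extension of Lemma \ref{lem:sofic} yields $\tr^{(d)}_{\vertex_n}(b_n^k) \to \tau^{(d)}(b^k)$ for every $k \ge 0$. Because $\sup_n \|b_n\| \le K$ for some $K$ depending only on $a$ (for instance $K = \bigl(\sum_w \|c_w\|\bigr)^2$ when $a = \sum_w c_w w$ with $c_w \in M_d \Z$), all of the spectral measures are supported in $[0, K]$, so moment convergence on a compact interval upgrades to weak convergence.

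The upper bound $\limsup_n \mu_{b_n}(\{0\}) \le \mu_b(\{0\})$ is then immediate from the Portmanteau theorem applied to the closed set $\{0\}$. The matching lower bound is the main obstacle, and it is where the integrality hypothesis is essential. Since $\pi(w)$ acts by a permutation matrix on $\ell^2(\vertex_n)$ and $a$ has entries in $\Z \bF_S$, the matrix $b_n$ has entries in $\Z$; hence its characteristic polynomial $\chi_n(t)$ lies in $\Z[t]$. Write $\chi_n(t) = t^{k_n} q_n(t)$ with $q_n \in \Z[t]$ and $q_n(0) \neq 0$, so $|q_n(0)| \ge 1$ and $|q_n(0)|$ equals the product of the nonzero eigenvalues of $b_n$. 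Splitting this product at a threshold $\epsilon \in (0, 1)$ and bounding each factor either by $\epsilon$ or by $K$ gives
$$
1 \le \epsilon^{M_n^{<}}\, K^{M_n^{\ge}}\,,
$$
where $M_n^<$ and $M_n^\ge$ count eigenvalues of $b_n$ in $(0, \epsilon)$ and $[\epsilon, K]$ respectively. Taking logarithms and dividing by $|\vertex_n|$ (recalling $M_n^\ge \le d|\vertex_n|$) yields the uniform bound
$$
\mu_{b_n}\bigl((0, \epsilon)\bigr) \le \frac{d \log K}{\log(1/\epsilon)}\,.
$$

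To finish, write $\mu_{b_n}(\{0\}) = \mu_{b_n}\bigl([0, \epsilon)\bigr) - \mu_{b_n}\bigl((0, \epsilon)\bigr)$ for any $\epsilon \in (0, 1)$. Since all the measures are supported in $[0, \infty)$, we have $\mu_{b_n}\bigl([0, \epsilon)\bigr) = \mu_{b_n}\bigl((-\infty, \epsilon)\bigr)$, and the latter set is open, so Portmanteau gives $\liminf_n \mu_{b_n}\bigl([0, \epsilon)\bigr) \ge \mu_b\bigl([0, \epsilon)\bigr) \ge \mu_b(\{0\})$. Combining with the uniform bound on $\mu_{b_n}\bigl((0, \epsilon)\bigr)$ and sending $\epsilon \to 0$ yields $\liminf_n \mu_{b_n}(\{0\}) \ge \mu_b(\{0\})$, matching the upper bound. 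Without the $\Z$-coefficients assumption only the upper bound is available and the limit can genuinely fail to exist, so the integrality hypothesis is truly essential.
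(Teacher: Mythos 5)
Your proof is correct. Note, though, that the paper does not prove this statement at all: it is imported verbatim from Elek--Szab\'o \cite[Proposition 6.1(a)]{elekszabo}, with only the remark that ``the proof uses in an essential way the integrality of coefficients of $a \in M_d \Z \bF_S$.'' Your argument supplies exactly the standard L\"uck determinant argument behind that remark, and every step checks out: the reduction to $b_n = \rho^{(d)}_{G_n}(a)^*\rho^{(d)}_{G_n}(a)$ via $\ker(T^*T)=\ker T$; moment convergence $\tr^{(d)}_{\vertex_n}(b_n^k)\to\tau^{(d)}(b^k)$ from the entrywise $M_d$-extension of Lemma \ref{lem:sofic} (valid along the full sequence, not just the ultrafilter, which is what the theorem requires); the uniform norm bound giving weak convergence of compactly supported spectral measures; the Portmanteau upper bound at the closed set $\{0\}$; and the lower bound from the fact that $b_n$ is an integer matrix, so the product of its nonzero eigenvalues is at least $1$, yielding the uniform tail estimate $\mu_{b_n}\bigl((0,\epsilon)\bigr)\le d\log K/\log(1/\epsilon)$. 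The only cosmetic issue is the normalization of $\tr^{(d)}_{\vertex_n}$ (total mass $d$ versus $1$), which rescales the constant in that estimate but does not affect the conclusion. Your closing observation that without integrality only the upper semicontinuity survives is consistent with the paper's own discussion of the failure of \cite[Proposition 6.1(b)]{elekszabo}.
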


The statement of the previous theorem is not just a consequence of weak*
convergence of spectral measures. The proof uses in an essential way the
integrality of coefficients of $a \in M_d \Z \bF_S$. The first
results of this form were obtained by L\"uck in \cite{lueck}, and over the
years they inspired many results of the same type.
Analogues of L\"uck's approximation theorem in the context of convergent sequences of finite graphs were studied in \cite{abertviragthom}.

Note that Proposition 6.1 in \cite{elekszabo}
contains a part (b), which asserted that 
\begin{equation} \label{claimb}
\int_{0^+}^{\infty} \log (t) \ d \mu_{|\rho^{(d)}_{G_n}(a)|}(t) \to \int_{0^+}^{\infty} \log (t) \ d \mu_{|\rho^{(d)}_{G}(a)|}(t) \quad \mbox{as } n \to \infty,\end{equation}
in a slightly more general form not assuming that the approximation is given by Schreier graphs; see \cite{elekszabo}. 
This part of the claim remained unproven in \cite{elekszabo}. Very
recently, it was discovered that \cite[Proposition 6.1(b)]{elekszabo} is
actually wrong (in the form more general than \eqref{claimb}).
It was shown independently by Lov\'asz and by
Grabowski-Thom that already the Cayley diagram of $G=\Z$ with respect to some
specific multi-set of generators admits a sofic approximation
$(G_n)_n$---albeit not by Schreier graphs---so that $|\vertex(G_n)|^{-1}
\log | \!\det A(G_n)|$ does not converge, where $A(G_n)$ denotes the
adjacency matrix of $G_n$. It is still possible that \eqref{claimb} holds
as written here.

\begin{cor} \label{soficapp}
Let\/ $\gp$ be a sofic group and $S$ be a finite generating set of elements
in $\gp$. Let $(G_n)_n$ be a sequence of finite $S$-labelled Schreier
graphs whose limit is $G:={\rm Cay}(\gp,S)$. Let $d \in \bbN$ and $a \in
M_d \Z \bF_S$. Let $P_n \in M_d B(\ell^2 \vertex_n)$ denote the
projection onto the kernel of $\rho^{(d)}_{G_n}(a)$ and $P \in M_d R
\Gamma$ denote the projection onto the kernel of $\rho^{(d)}_G(a)$. Then the
sequence $(P_n)_n$ represents $\iota_S(P) \in \prod_{n \to \omega}
(M_dB(\ell^2 \vertex_n),\tr^{(d)}_{\vertex_n})$.
\end{cor}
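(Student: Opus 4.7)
The plan is to identify, in the ultraproduct, both the image $\iota^{(d)}(P)$ and the element $P'$ represented by $(P_n)_n$ with a single ``kernel projection'' $Q$, and then invoke faithfulness of $\tau_\omega$. Write $A := \rho^{(d)}_G(a)$ and $A_n := \rho^{(d)}_{G_n}(a)$. By construction of the embedding in Proposition \ref{emb} and its natural matrix amplification $\iota^{(d)} \colon (M_d R(\Gamma),\tau^{(d)}) \to \prod_{n \to \omega} (M_d B(\ell^2 \vertex_n),\tr^{(d)}_{\vertex_n})$, the sequence $(A_n)_n$ represents $\iota^{(d)}(A)$. The uniformly bounded sequence $(P_n)_n$ represents some element $P'$ of the ultraproduct; applying Lemma \ref{lem:app} to the continuous function $f(t) = t - t^2$ (together with the fact that $J_\omega$ is a $*$-ideal) shows $P' = P'{}^2 = P'{}^*$, so $P'$ is itself a projection.

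The identity $A_n P_n = 0$ holds by definition of $P_n$ for every $n$, so in the ultraproduct $\iota^{(d)}(A)\, P' = 0$; hence $P' \leq Q$, where $Q$ denotes the spectral projection of $\iota^{(d)}(A)^*\iota^{(d)}(A) = \iota^{(d)}(A^*A)$ at $\{0\}$. Analogously $AP = 0$ gives $\iota^{(d)}(A)\,\iota^{(d)}(P) = 0$, so $\iota^{(d)}(P) \leq Q$. Since $\iota^{(d)}$ is trace-preserving, the spectral distributions of $\iota^{(d)}(A^*A)$ and of $A^*A$ coincide, so
\[
\tau_\omega(Q) \;=\; \mu_{A^*A}(\{0\}) \;=\; \tau^{(d)}(P) \;=\; \tau_\omega\bigl(\iota^{(d)}(P)\bigr).
\]
Faithfulness of $\tau_\omega$ applied to the positive operator $Q - \iota^{(d)}(P)$ (which is in fact a projection) forces $\iota^{(d)}(P) = Q$.

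It remains to show $P' = Q$. This is where Theorem \ref{eleksz} (Elek--Szab\'o) enters: it furnishes $\tau_\omega(P') = \lim_{n \to \omega} \tr^{(d)}_{\vertex_n}(P_n) = \tau^{(d)}(P) = \tau_\omega(Q)$. Combined with $P' \leq Q$ and faithfulness of $\tau_\omega$, this yields $P' = Q = \iota^{(d)}(P)$, which is the desired conclusion. The main obstacle is precisely this last trace identity: without the L\"uck-type approximation supplied by Elek--Szab\'o (which genuinely uses the integrality hypothesis $a \in M_d \Z \bF_S$), one would in general only have $\tau_\omega(P') \leq \tau_\omega(Q)$ from the subprojection relation, and could not conclude equality. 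Everything else is routine bookkeeping inside a tracial ultraproduct.
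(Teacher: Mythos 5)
Your proof is correct and follows the same essential strategy as the paper's: show that $(P_n)_n$ represents a subprojection of $\iota^{(d)}(P)$, then use the Elek--Szab\'o trace convergence (which is indeed where the integrality of $a$ is consumed) together with faithfulness of the ultraproduct trace to upgrade the subprojection relation to equality. The only difference is in the mechanism for the subprojection step and it is essentially cosmetic: you derive $P'\leq \iota^{(d)}(P)$ from the algebraic identity $A_nP_n=0$ plus the trace identification of $\iota^{(d)}(P)$ with the kernel projection $Q$ of $\iota^{(d)}(A^*A)$, whereas the paper reduces to $a=b^*b$ and applies the decreasing polynomials $f_k(x)=(1-x/c)^{2k}$, whose infimum is the kernel projection, via Lemma \ref{lem:app}; both routes are valid.
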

\begin{proof} Since $\ker(T^*T)=\ker(T)$ for any operator $T$ on a Hilbert
space, we may assume without loss of generality that $a = b^*b$ for some $b
\in M_d\bbC \bF_s$.
We set $T_n:= \rho^{(d)}_{G_n}(a)$ and $T:= \iota_S(\rho^{(d)}(a))$. It is
clear from Corollary \ref{emb} that the sequence $(T_n)_n$ represents $T$.
Set $c:= \sum_{i,j=1}^d \|a_{i,j}\|_1$,
where $a_{i,j}$ denotes the $(i,j)$-entry of the matrix $a$ and
$\|\sum_{\gamma} \alpha_{\gamma} \gamma\|_1 := \sum_{\gamma} |\alpha_{\gamma}|$. It
is easy to verify that $c \geq \sup \big\{ \|T_n\| \st n \in \bbN \big\}$
and $c \geq \|T\|$.
By Lemma \ref{lem:app}, for any continuous function $f \colon \bbR
\to \bbR$, the sequence $(f(T_n))_n$ represents $f(T)$. 
Now, let $(f_k)_k$ be a sequence of polynomials that are non-negative on
$[0, c]$ and satisfy 
$$\inf_k f_k(x) = \begin{cases} 1 & \mbox{if } x=0, \\ 0 & \mbox{if } x \in (0,c]. \end{cases}$$
For example, one can take $f_k(x) := (1-x/c)^{2k}$.
Note that $\inf_k f_k(T_n) = P_n$ and $\inf_k f_k(T)=\iota_S(P)$, where the infimum is taken with respect to the usual ordering on self-adjoint operators.
It is clear from Lemma \ref{lem:app} that $(P_n)_n$ represents a
projection that is smaller than $f_k(T)$ for each $k \in \bbN$ and hence
smaller than $\iota_S(P) = \inf_k f_k(T)$. 
L\"uck's approximation theorem (Theorem \ref{eleksz}) says that
$\lim_{n \to \infty} \tr^{(d)}_{\vertex_n}(P_n) = \tau^{(d)}(P) =
\tau^{(d)}_{\omega}(\iota_S(P))$. This shows that the subprojection of
$\iota_S(P)$ that $(P_n)_n$ represents has the same trace as
$\iota_S(P)$.
Since $\tau^{(d)}_{\omega}$ is faithful, this implies that $(P_n)_n$
represents $\iota_S(P)$. This finishes the proof.
\end{proof}

Let $a \in M_d \Z \bF_S$ be self-adjoint.  The heart of the proof
of the preceding theorem is that the convergence of spectral measures of
$T_n:=\rho^{(d)}_{G_n}(a)$ to the spectral measure of $T:=\rho^{(d)}(a)$
is far better than expected from Remark \ref{specmeas}, due to the
integrality of the coefficients of $a$.  Indeed, L\"uck's approximation
theorem asserts that $\lim_{n \to\infty} \mu_{T_n}(\{0\}) =
\mu_T(\{0\})$, which is not a consequence of weak convergence $\mu_{T_n}
\to \mu_T$ alone. In fact, even more is true. It is a consequence of
results in \cite{thom} that the integrated densities $F_{T_n}(\lambda):=
\mu_{T_n}((-\infty,\lambda])$ converge uniformly to $F_T(\lambda):=
\mu_T((-\infty,\lambda])$, i.e., $\sup\{ |F_{T_n}(\lambda) -
F_{T}(\lambda)| \st \lambda \in \bbR \} \to 0$ as $n \to \infty$. This
allows for more results like Theorem \ref{soficapp}, for example, for
other spectral projections of the operator $\rho^{(d)}(a)$.

Suppose that the random weak limit of $(G_n)$ is a Cayley graph $G$. In the
case where $G$ is amenable and $G_n$ are merely connected subgraphs of $G$,
we have that the random weak limit of the uniform spanning tree measures
$\ust_{G_n}$ equals $\wsf_G =
\fsf_G$. Despite the definition of $\fsf$, the random weak limit of
$\ust_{G_n}$ need not be $\fsf_G$
for a sofic approximation $(G_n)_n$ to a non-amenable group, $G$.
In fact, the limit of 
$\ust_{G_n}$ is always $\wsf_G$ (see \cite[Proposition
7.1]{AL:urn}). It is more complicated to get $\fsf_G$ as a limit.
The proof of \rref c.sfcouple/ provides such, but is not very explicit.
Here we give a more explicit method, which still provides an invariant
monotone coupling with $\wsf_G$.
For $L \ge 0$, let $\CYCLE_L(G)$ denote the space spanned by the cycles in
$G$ of length at most $L$.
Write $\fsf_{G, L}$ for the determinantal probability measure corresponding to
the projection onto $\CYCLE_L(G)^\perp$.
This measure is not necessarily concentrated on forests; rather, it is
concentrated on subgraphs of girth larger than $L$.
By \rref t.dominate/, we have $\ust_G \dom \fsf_{G, L}$ for all finite $G$ and
$L$.

\procl t.cycleslimit
If $G$ is a Cayley graph of a group $\gp$ and
is the random weak limit of $(G_n)_n$, then if $L(n) \to\infty$
sufficiently slowly,
the random weak limit of $\fsf_{G_n, L(n)}$ 
equals $\fsf_G$.
A subsequence of monotone couplings witnessing $\ust_{G_n} \dom \fsf_{G_n,
L(n)}$ has a $\gp$-invariant monotone coupling witnessing $\wsf_G \dom
\fsf_G$ as a weak* limit.
\endprocl

\rproof
By Corollary \ref{soficapp} and \rref l.ultradetl/, for all $L \ge 0$, the
random weak limit of $\fsf_{G_n, L}$ exists and equals $\fsf_{G, L}$. Since 
$\CYCLE_L(G) \uparrow \CYCLE(G)$,
the weak* limit of $\fsf_{G, L}$ equals $\fsf_G$.
A subsequence of monotone couplings witnessing $\ust_{G_n} \dom \fsf_{G_n,
L}$ has a $\gp$-invariant monotone coupling witnessing $\wsf_G \dom
\fsf_{G, L}$ as a random weak limit, which, as $L \to\infty$, has a
$\gp$-invariant monotone coupling witnessing $\wsf_G \dom \fsf_G$ as a
weak* limit.
The result follows. 
\Qed

In particular, with the assumptions of \rref t.cycleslimit/, calculation of
average expected degree shows that
$$
\lim_{L \to\infty} \lim_{n \to\infty} \frac{\dim
\CYCLE_L(G_n)}{|\vertex(G_n)|}
=
2\beta_1^{(2)}(\gp) + 2
\,.
$$
This equation is already known, as it follows from L\"uck's results and the
Determinant Conjecture, which was established for sofic groups in
\cite[Theorem 5]{elekszabo}.

\bsection{Consequences of the invariant monotone couplings}{s.conseq}

Given a network with positive edge
weights and a time $t > 0$, form the \dfnterm{transition operator} $P_t$ for
continuous-time random walk whose rates are the edge weights; in the case
of unbounded weights (or degrees), we take the minimal process, which dies
after an explosion.
That is, if the entries of a matrix $A$ indexed by the vertices are equal
off the diagonal to the negative of the edge weights and the diagonal
entries are chosen to make the row sums zero, then $P_t := e^{-A t}$; in
the case of unbounded weights, we take the self-adjoint extension of $A$
corresponding to the minimal process.
The matrix $A$ is called the \dfnterm{infinitesimal generator} or the
\dfnterm{Laplacian} of the network.

When the weights are random, we have a continuous-time random walk in a
random environment. If the distribution $\mu$ of the edge weights is
group-invariant, then $\E[P_t(o, o)] = \tr_\mu(e^{-A t})$.
Hence, if there are two sets of random weights, $A^{(1)}$ and $A^{(2)}$,
coupled by an invariant measure $\nu$ with the property that $A^{(1)}(e)
\le A^{(2)}(e)$ $\nu$-a.s.\ for all edges $e$, then the corresponding
return probabilities $P^{(1)}$ and $P^{(2)}$ satisfy $\E[P_t^{(1)}(o, o)]
\ge \E[P_t^{(2)}(o, o)]$: see \cite[Theorem 5.1]{AL:urn}.
Whether this inequality holds without assuming the existence of an
invariant coupling, but merely that the two weight distributions are
invariant, is open; it was asked by \rref b.FontesMathieu/. 

Of course, if the weights are simply the indicators of invariant random subsets,
then we obtain random walk on the random clusters. Thus, when we have an
invariant coupling of two percolation measures, we have the above
inequality on return probabilities. 
In particular, we have shown that such an inequality holds when the two
percolation measures are determinantal and arise from positive contractions
in $R(\gp)$.
A similar result holds when a more
complicated increasing function of the random subsets is used (such as
using for a weight of an edge the sum of the degrees of its
endpoints in the cluster), since given an invariant monotone coupling of
the two cluster measures, one easily constructs an invariant monotone
coupling of such weights.

For another consequence of our coupling result, we consider the $\FSF$.
It was proved in 
\BLPSusf\ that for every Cayley graph, whether sofic or not, a.s.\
each tree in $\wsf_G$ has one end. In addition, \BLPSusf\ also proved that
if $\fsf_G \ne \wsf_G$, then a.s.\ at least one tree in the $\FSF$ has
infinitely many ends. \BLPSusf\ conjectured that a.s.\ every tree in the
$\FSF$ has infinitely many ends in this case. We can now make a small
contribution to this conjecture:

\procl c.inftyends
If $G$ is the Cayley graph of a sofic group, then either $\fsf_G = \wsf_G$,
in which case a.s.\ each tree in $\fsf_G$ has one end, or $\fsf_G \ne \wsf_G$,
in which case a.s.\ each tree in $\fsf_G$ has one or infinitely many ends,
with some tree having infinitely many ends.
\endprocl

\rproof
Suppose that $\fsf_G \ne \wsf_G$.
Let $\mu$ be an invariant monotone coupling of the two spanning forest
measures.
Because $\WSF$ is spanning, each tree in $\FSF$ consists of unions of
(infinite) trees in $\WSF$ with additional connecting edges.
If there are only finitely many connecting edges, say, $N$, in some tree
$T$ in $\FSF$, then each vertex in $T$ can send mass $1/N$ to each
endpoint of each of the connecting edges in $T$. Such endpoints would
receive infinite mass, yet no point would send out mass more than 2.
Thus, the Mass-Transport Principle tells us that this event has probability
0.
Therefore, there are a.s.\ either no connecting edges or infinitely many in
each tree of $\FSF$. Combining this with what was previously known
gives the corollary. 
\Qed

The above consequences of \rref t.gencouple/ were for specific models. We
close with a general consequence that is relevant in ergodic theory.

For a set $X$, write $\pi_x \colon \{0, 1\}^X \to \{0, 1\}$ for the
natural coordinate projections ($x \in X$).
For $K \subseteq X$, write $\fd(K)$ for the $\sigma$-field on
$\{0, 1\}^X$ generated the maps $\pi_x$ for $x \in K$. 
When $X$ is the vertex set $\verts$ of a graph,
a probability measure $\mu$ on $\{0, 1\}^\verts$ is called
\dfnterm{$m$-dependent} if $\fd(K_1), \dots, \fd(K_p)$ are independent whenever
the sets $K_i$ are pairwise separated by graph distance $> m$. A similar
definition holds when $X$ is the edge set of a graph.
We say that
$\mu$ is \dfnterm{finitely dependent} if it is $m$-dependent for some $m <
\infty$.

Note that if $Q \in \bbC\Gamma$ is a positive contraction, then $\P^Q$ is
finitely dependent.
The Kaplansky density theorem implies that
every positive contraction $Q \in R(\Gamma)$ is the limit in the
strong operator topology (SOT) of positive contractions $Q_n \in \bbC\Gamma$ (see
\cite[Cor.~5.3.6]{KR1}). Combining these two observations, we see
that $\P^Q$ is the weak* limit of
the finitely dependent measures $\P^{Q_n}$.
Likewise, if $Q \in R(\gp, S)$ is a positive contraction, then there are
positive contractions $Q_n \in M_S(\bbC\gp)$ such that 
$\P^Q$ is the weak* limit of the finitely dependent measures $\P^{Q_n}$.

In the case that $\Gamma$ is sofic, we can strengthen weak* convergence to
$\dbar$-convergence because of our
monotone coupling result. This follows ideas of \rref b.LS:dyn/, but that
case, where $\gp$ is commutative, is much easier.

Let $\mu_1$ and $\mu_2$ be two $\gp$-invariant probability measures on $A^W$,
where $\gp$ acts quasi-transitively on $W$ and $A$ is finite. Let
$W'$ be a section of $\gp\backslash W$. Then 
Ornstein's $\dbar$-metric is defined as
$$
\dbar(\mu_1, \mu_2)
:=
\min \Big\{\sum_{w \in W'} \Pbig{X_1(w) \ne X_2(w)} \st X_1 \sim \mu_1,\, X_2
\sim \mu_2,\, (X_1, X_2) \hbox{ is $\gp$-invariant} \Big\}
\,.
$$
This is a metric for the following reason.
Suppose that $(X_1, X_2)$ is a joining of $(\mu_1, \mu_2)$ and $(X_3, X_4)$
is a joining of $(\mu_2, \mu_3)$. Given a Borel set $C \subseteq A^W$,
write $f_C(X_2) := \P[X_1 \in C \mid X_2]$ and
$g_C(X_3) := \P[X_4 \in C \mid X_3]$. The \dfnterm{relatively independent
joining of $(X_1, X_2)$ and $(X_3, X_4)$ over $\mu_2$} is defined
to be the measure $\mu$ on $(A^W)^3$ determined by 
$$
(C_1, C_2, C_3)
\mapsto
\int_{C_2} f_{C_1}(y) g_{C_3}(y) \,d\mu_2(y)
$$
for $C_1, C_2, C_3 \subseteq 2^W$.
It is easily verified and well known that this measure $\mu$ is indeed
$\gp$-invariant, and therefore a joining.
(Intuitively, we merely choose $X_2 = X_3$ to create this joining out of
the original pair of joinings. More precisely, $X_1$ and $X_4$ are then
chosen independently given $X_2$.)
Now choose the joinings $(X_1, X_2)$ and $(X_3, X_4)$ to achieve the minima
in the definition of $\dbar$.
If $(Y_1, Y_2, Y_3) \sim \mu$, then $\dbar(\mu_1, \mu_3) \le \sum_{w \in
W'} \Pbig{Y_1(w) \ne Y_3(w)} \le \sum_{w \in
W'} \Pbig{Y_1(w) \ne Y_2(w)} + \sum_{w \in
W'} \Pbig{Y_2(w) \ne Y_3(w)} =
\sum_{w \in
W'} \Pbig{X_1(w) \ne X_2(w)} + \sum_{w \in
W'} \Pbig{X_2(w) \ne X_3(w)} = \dbar(\mu_1, \mu_2) + \dbar(\mu_2, \mu_3)$,
as desired.

If $(\mu_1, \mu_2, \ldots, \mu_n)$ is a sequence of $\gp$-invariant
probability measures on $A^W$ and $\mu_{k, k+1}$ is a joining of $(\mu_k,
\mu_{k+1})$ for each $k = 1, 2, \ldots, n-1$, then there is an associated
relatively independent joining of all $n$ measures obtained by successively
taking a relatively independent joining $(Y_1, Y_2, Y_3)$ of $(\mu_1, \mu_2)$
with $(\mu_2, \mu_3)$ over $\mu_2$, then the relatively independent joining
of $(Y_1, Y_2, Y_3)$ with $(\mu_3, \mu_4)$ over $\mu_3$, where we regard
$(Y_1, Y_2, Y_3) \in (A \times A)^W \times A^W$, etc.
By taking a limit of such joinings, we can do the same for an infinite
sequence of invariant measures on $A^W$ with given successive joinings.

In case $A = \{0, 1\}$ and there is a monotone joining of $\mu_1$ and $\mu_2$,
then such a joining may be used to calculate $\dbar(\mu_1, \mu_2)$:

\procl l.dbarmonocalc
Let $\mu_1$ and $\mu_2$ be two $\gp$-invariant probability measures on $2^W$,
where $\gp$ acts quasi-transitively on $W$. Let
$W'$ be a section of\/ $\gp\backslash W$. If there is a monotone joining
$(X_1, X_2)$ of
$(\mu_1, \mu_2)$, then 
$$
\dbar(\mu_1, \mu_2)
=
\sum_{w \in W'} |\P[X_1(w) = 0] - \P[X_2(w) = 0]|
=
\sum_{w \in W'} \Pbig{X_1(w) \ne X_2(w)}
\,.
$$
Suppose that in addition,
$\mu_3$ and $\mu_4$ are two $\gp$-invariant probability measures on $2^W$
such that there are monotone joinings witnessing
$\mu_1 \dom \mu_3 \dom \mu_2$ and
$\mu_1 \dom \mu_4 \dom \mu_2$.
Then $\dbar(\mu_3, \mu_4) \le \dbar(\mu_1, \mu_2)$.
\endprocl

\rproof
It is clear that any joining $(X_1, X_2)$ of $(\mu_1, \mu_2)$ has the
property that 
$$
\sum_{w \in W'} \Pbig{X_1(w) \ne X_2(w)} \ge 
\sum_{w \in W'} |\P[X_1(w) = 0] - \P[X_2(w) = 0]|
$$
and that a monotone joining gives equality. 
Furthermore, if we extend $(X_1, X_2)$ to a relatively independent joining
$(X_1, X_2, X_3, X_4)$ with the assumed joinings 
satisfying $X_1 \le X_3 \le X_2$ and $X_1 \le X_4 \le X_2$, then the
joining $(X_3, X_4)$ witnesses the desired inequality.
\Qed

We shall prove the following:

\procl t.dbarfindep
Let $\gp$ be a sofic group and $S$ be a finite generating set of elements
in $\gp$. 
If $Q$ is a positive contraction in $R(\gp)$ or in $R(\gp, S)$, then there
exists a sequence of positive contractions $Q_n$ in $\bbC\gp$ or
in $M_S(\bbC\gp)$ such that the finitely dependent probability measures
$\P^{Q_n}$ converge to $\P^Q$ in the $\dbar$-metric.
\endprocl

Note that when $\gp$ is amenable, \rref t.gencouple/ is easy.
In addition, when $\gp$ is amenable, it is known that 
the $\gp$-invariant finitely dependent processes are isomorphic to
Bernoulli shifts by using the very weak Bernoulli condition of \rref
b.Orn:book/, extended to the amenable case by \rref b.Adams/; 
that
factors of Bernoulli shifts are isomorphic to Bernoulli shifts
\cite{Orn:factor,OrnW:amen};
and that
the class of processes isomorphic to Bernoulli shifts is $\dbar$-closed
\cite{Orn:factor,OrnW:amen}.
Thus, we have the following corollary, which was proved for abelian $\gp$
in \cite{LS:dyn}:

\procl c.dbarFIID
Let $\gp$ be an amenable group and $S$ be a finite generating set of elements
in $\gp$. 
If $Q$ is a positive contraction in $R(\gp)$ or in $R(\gp, S)$, then 
$\P^Q$ is isomorphic to a Bernoulli shift.
\endprocl

On the other hand, in the non-amenable setting,
Popa gave an example of a factor of a Bernoulli shift that is not
isomorphic to a Bernoulli shift. Indeed, \cite[Corollary 2.14]{Popa} showed
that for any infinite group $\Gamma$ 
with Kazhdan's Property (T), the natural action $\Gamma
\curvearrowright ({\mathbb T},\mu_{\rm Haar})^{\Gamma}/(\Z/n\Z)$ is not
isomorphic to a Bernoulli shift for any $n \geq 2$. Here, $\Z/n\Z$ is
understood to act diagonally on $({\mathbb T},\mu_{\rm Haar})^{\Gamma}$ by
rotation in the obvious way.  For such $\gp$, it follows that the natural
action $\Gamma \curvearrowright (\Z/n\Z,\mu_{\rm Haar})^{\Gamma}/(\Z/n\Z)$
is not isomorphic to a Bernoulli shift for any $n \geq 2$.

Natural questions, therefore, include these, which are all settled in the
amenable case:

\procl q.findep
Is every finitely dependent process a factor of a Bernoulli shift?
\endprocl

\procl q.dbar
Let $\gp$ act quasi-transitively on a countable set $W$ and let $A$ be finite.
Is the class of measures on $A^W$ that are factors of Bernoulli shifts
closed in the $\dbar$-metric?
\endprocl

\procl q.detlFIID
Are determinantal probability measures associated to equivariant positive
contractions factors of Bernoulli shifts? 
\endprocl

By \rref t.dbarfindep/, positive answers to Questions \ref{question:findep}
and \ref{question:dbar} would imply a positive answer to \rref
q.detlFIID/ on sofic groups.

In order to prove \rref t.dbarfindep/, we shall use two lemmas.
We give statements and
details for $R(\gp)$; they
admit straightforward extensions to $R(\Gamma,S)=M_S(R(\gp))$.

\procl l.dbarnorm
If $Q$ and $Q'$ are positive contractions in $R(\gp)$, then
$$
\dbar\big(\P^Q, \P^{Q'}\big) \le \frac{6 \|Q - Q'\|}{1 + 2\|Q - Q'\|}
\,.
$$
\endprocl

\rproof
Write $r := \|Q - Q'\|$ and $t := r/(1+2r)$. We set $Q_t := (1-t)Q+ t(I-Q)$.
Then 
$
Q \ge (1-t)Q$ and $Q_t \ge (1-t)Q$,
whence by the triangle inequality and \rref l.dbarmonocalc/, we have
$$
\dbar\big(\P^Q, \P^{Q_t}\big)
\le
\dbar\big(\P^Q, \P^{(1-t)Q}\big)
+
\dbar\big(\P^{(1-t)Q}, \P^{Q_t}\big)
\le
\|tQ\| + \|t (I-Q)\|
\le 2t
\,.
$$
Likewise, with $Q'_t := (1-t)Q'+ t(I-Q')$, we have 
$$
\dbar\big(\P^{Q'}, \P^{Q'_t}\big)
\le 2t
\,.
$$
In addition, $tI \le Q'_t \le (1-t)I$ and $Q_t - Q'_t = (1-2t)(Q-Q')$ has
norm $(1-2t)r = t$, whence
$$
0 \le Q'_t - tI \le Q_t \le Q'_t + tI \le I
\,.
$$
\rref l.dbarmonocalc/ again yields
$$
\dbar\big(\P^{Q_t}, \P^{Q'_t}\big) 
\le
\dbar\big(\P^{Q'_t-tI}, \P^{Q'_t+tI}\big) 
=
2t
\,.
$$
Putting together these inequalities and using the triangle inequality for
the $\dbar$-metric gives $\dbar\big(\P^Q, \P^{Q'}\big) \le 6t$, which is
the desired result. 
\Qed

When $Q$ and $Q'$ commute,
one can improve the bound in \rref l.dbarnorm/ by replacing the norm on the
right-hand side with the Schatten 1-norm. Recall that this norm is 
$$
\|T\|_1 := \tau\big((T^* T)^{1/2}\big)
\,.
$$
In this language,
when $\Gamma$ is abelian, \rref b.LS:dyn/ showed that
$\dbar(\P^Q, \P^{Q'}) \le \|Q - Q'\|_1$.
In fact, the same proof can be adapted for all $\gp$ to the case that
$Q$ and $Q'$ commute. We do not know
whether this inequality always holds, but we have the following weaker
version:

\procl l.dbarSchatten
If $Q$ and $Q'$ are positive contractions in $R(\gp)$, then
$$
\dbar\big(\P^Q, \P^{Q'}\big) \le 
6 \cdot 3^{2/3} \|Q - Q'\|_1^{1/3}
\,.
$$
If $Q_n$ and $Q$ are positive contractions in $R(\gp)$ with
$Q_n \to Q$ in {\rm SOT}, then 
$\dbar\big(\P^{Q_n}, \P^Q\big) \to 0$.
\endprocl

\rproof
We shall use the Schatten 2-norm, $\|T\|_2 := \sqrt{\tau (T^* T)}$, and
the Powers-St{\o}rmer inequality, $\|T_1 - T_2\|^2_2 \le \|T_1^2 -
T_2^2\|_1$ for $0 \le T_1, T_2 \in R(\gp)$; see \cite[Proposition
6.2.4]{BroOz} for a proof that extends to our context.

Write $T := Q^{1/2}$ and $T' := (Q')^{1/2}$.
Let $E$ be the spectral resolution of the identity for $T-T'$, so that
$$
T-T' = \int_{-1}^1 s\,dE(s)
\,.
$$
Thus, 
$$
r 
:= 
\int_{-1}^1 s^2 \,d\nu(s)
=
\|T-T'\|_2^2
\le
\|Q - Q'\|_1
$$
for the scalar measure $A \mapsto \nu(A) := \tau \big(E(A)\big)$.
Define $t := (r/3)^{1/3}$ and $B := [-1, -t] \cup [t, 1]$.
We have $\nu(B) \le r/t^2$ by Markov's inequality.
Write $P := E(B)$ and $P^\perp := E\big((-t, t)\big)$.
Define $Q_1 := T P T$
and $Q'_1 := T' P T'$, and write $Q_2 := Q-Q_1$, $Q'_2 := Q'-Q'_1$.
These are all positive contractions in $R(\gp)$; for example, $Q_2 = T
P^\perp T = (P^\perp T)^* (P^\perp T) \ge 0$.
Furthermore, 
$$
Q_2 - Q'_2
=
T P^\perp (T - T') + (T-T') P^\perp T'
$$
and $\|T\|, \|T'\| \le 1$,
whence $\|Q_2 - Q_2'\| \le 2 \|(T - T') P^\perp\|$. Since
$$
(T - T') P^\perp
=
\int_{(-t, t)} s \,dE(s)
\,,
$$
it follows that $\|(T - T') P^\perp\| \le t$, whence
$\|Q_2 - Q'_2\| \le 2t$ and $\dbar\big(\P^{Q_2},
\P^{Q'_2}\big) \le 12t$ by \rref l.dbarnorm/.
Now, $\tau (Q_1) = \tau (T^2 P) = \tau (T^2 P^2) = \tau (P T^2 P) = \|T P
\delta_o\|^2 \le \|T\|^2 \|P \delta_o\|^2 \le \tau (P) = \nu(B) \le r/t^2$.
Since $Q_2 \le Q$, it follows that $\dbar\big(\P^Q, \P^{Q_2}\big) =
\tau(Q-Q_2) = \tau (Q_1) \le r/t^2$.
Likewise, $\dbar\big(\P^{Q'}, \P^{Q'_2}\big) \le r/t^2$.
Therefore, 
$$
\dbar\big(\P^Q, \P^{Q'}\big)
\le
\dbar\big(\P^Q, \P^{Q_2}\big) + \dbar\big(\P^{Q_2}, \P^{Q'_2}\big) +
\dbar\big(\P^{Q'_2}, \P^{Q'}\big)
\le
\frac{r}{t^2} + 12t + \frac{r}{t^2}
=
6 (9r)^{1/3}
\,,
$$
as desired. 

The second part of the assertion follows from the inequality
$$\|Q_n \delta_o - Q
\delta_o\| = \|Q_n - Q\|_2 \ge \|Q_n - Q\|_1\,.$$
This finishes the proof.\Qed

We remark that
if it is assumed only that $Q_n$ converges to $Q$ in WOT, then it does {\em
not} follow that $\P^{Q_n}$ converges to $\P^Q$ in $\dbar$. In fact, on
$\Z$, entropies need not converge: see the end of Sec.~6 of \rref b.LS:dyn/
for examples.
Here, we are using the fact that for processes on $\Z$, entropy is
$\dbar$-continuous \cite[Proposition 15.20]{Glasner}.

\begin{proof}[Proof of \rref t.dbarfindep/:]
This is immediate from \rref l.dbarSchatten/ and Kaplansky's Density Theorem, i.e., the fact that positive contractions in 
$R(\gp)$ are SOT-limits of positive contractions in $\C\gp$.
\end{proof}

By analogy with Bernoulli processes in the amenable case,
one could ask whether determinantal processes are finitely determined,
where we could define $\mu$ as \dfnterm{finitely determined} if whenever
$\mu_n \to \mu$
weak* and in sofic entropy, we have $\dbar(\mu_n, \mu) \to 0$. The converse
presumably holds for all processes. When $\gp$ is amenable, this is
known since sofic entropy equals ordinary metric entropy.
Here, we are relying on definitions and results of \cite{Bowen:sofic}.

Numerical calculation suggests that the inequality 
$\dbar(\P^Q, \P^{Q'}) \le \|Q - Q'\|_1$ always holds, even for finite
matrices without any invariance. Our proof of the weaker inequality \rref
l.dbarSchatten/ holds in that generality.
These inequalities appear to imply similar inequalities for continuous
determinantal point processes $(\sX, \sY)$, where the $\dbar$-metric is
replaced by taking the minimum over all joinings of the intensity of the
symmetric difference $\sX \triangle \sY$; we plan to pursue this elsewhere.

\bsection{Unimodular random rooted graphs}{s.unimodular}

We now extend the preceding theorems to their natural setting encompassing
all random weak limits of finite graphs with bounded degree (and somewhat
beyond).
One other setting in which it would be natural to investigate these
questions is that of vertex-transitive graphs and their
automorphism-invariant determinantal probability measures.
However, we are able to treat only the sofic ones (again), which, in
particular, excludes all non-unimodular transitive graphs.

We review a few definitions from the theory of unimodular random rooted
networks; for more details, see \rref b.AL:urn/.
A \dfnterm{network} is a (multi-)graph $\gh = (\vertex, \edge)$ together with a
complete separable metric space $\marks$ called the \dfnterm{mark space} and
maps from $\vertex$ and $\edge$ to $\marks$. Images in $\marks$ are called
\dfnterm{marks}.
The only assumption on degrees is that they are finite when loops are not
counted.
We omit the mark maps from our notation for
networks. 

A \dfnterm{rooted network} $(\gh, \bp)$ is a network $\gh$ with a distinguished
vertex $\bp$ of $\gh$, called the \dfnterm{root}.
A \dfnterm{rooted isomorphism} of rooted networks is an isomorphism of the
underlying networks that takes the root of one to the root of the other.
We do not distinguish between a rooted network and its
isomorphism class.
Let $\GG_*$ denote the set of rooted isomorphism classes of rooted
{\it connected\/} locally finite networks.
Define a separable complete metric $d_* \colon \GG_* \times \GG_* \to [0,1]$ on $\GG_*$ by letting the distance
between $(G_1, o_1)$ and $(G_2, o_2)$ be $1/(1+\alpha)$, where $\alpha$ is
the supremum of those $r > 0$ such that there is some rooted isomorphism of
the balls of (graph-distance) radius $\flr{r}$ around the roots of $G_i$
such that each pair of corresponding marks has distance less than $1/r$.
For probability measures $\rtd$, $\rtd_n$ on $\GG_*$, we write $\rtd_n \cd
\rtd$ when $\rtd_n$ converges weakly with respect to this metric.

For a (possibly disconnected)
network $\gh$ and a vertex $x \in \verts(\gh)$, write $\gh_x$ for the
connected component of $x$ in $\gh$.
If $\gh$ is finite, then write $U_\gh$ for a uniform random vertex of $\gh$
and $U(\gh)$ for the
corresponding distribution of $\big(\gh_{U_\gh}, U_\gh\big)$ on $\GG_*$.
Suppose that $\gh_n$ are finite networks and that $\rtd$ is a
probability measure on $\GG_*$.
We say that the \dfnterm{random weak limit} of a sequence $(\gh_n)_n$ is $\rtd$ if $U(\gh_n)
\cd \rtd$. 

A probability measure
that is a random weak limit of finite networks is called \dfnterm{sofic}.
In particular, a group is called sofic when its Cayley diagram is sofic.

All sofic measures are unimodular, which we now define.
Similarly to the space $\GG_*$, we define the space $\gtwo$ of isomorphism
classes of locally
finite connected networks with an ordered pair of distinguished vertices
and the natural topology thereon.
We shall write a function $f$ on $\gtwo$ as $f(\gh, x, y)$.
We refer to $f(\gh, x, y)$ as the \dfnterm{mass} sent from $x$ to $y$ in $\gh$.

\procl d.unimodular 
Let $\rtd$ be a probability measure on $\GG_*$.
We call $\rtd$ \dfnterm{unimodular} if it obeys the \dfnterm{Mass-Transport
Principle}:
For all Borel
$f \colon \gtwo \to [0, \infty]$,
we have
$$
\int \sum_{x \in \vertex(\gh)} f(\gh, \bp, x) \,d\rtd(\gh, \bp)
=
\int \sum_{x \in \vertex(\gh)} f(\gh, x, \bp) \,d\rtd(\gh, \bp)
\,.
$$
\endprocl

It is easy to see that every sofic measure 
is unimodular, as observed by \rref b.BS:rdl/, who introduced this
general form of the Mass-Transport Principle under the name 
``intrinsic Mass-Transport Principle".
The converse was posed as a question by \rref b.AL:urn/; it remains open.

Consider the Hilbert space $\Hilb(\mu) := \int^\oplus \ell^2\big(\vertex(\gh)\big)
\,d\rtd(\gh, \bp)$, a direct integral (see, e.g., \rref b.Nielsen/ or
\cite[Chapter 14]{KR}).
Here, we always choose canonical representatives for rooted-isomorphism
classes of networks, as explained
in \cite[Sec.~2]{AL:urn}; in particular, $\verts(\gh) = \bbN$. However,
this is merely for technical reasons of measurability, so we omit this from
our notation.
The space $\Hilb(\mu)$ is defined as the set
of ($\rtd$-equivalence classes of) $\rtd$-measurable functions $\xi$ defined on
(canonical) 
rooted networks $(\gh, \bp)$ that satisfy $\xi(\gh, \bp) \in
\ell^2(\verts(\gh))$ and $\int \|\xi(\gh, \bp)\|^2 \,d\rtd(\gh, \bp) < \infty$.
We write $\xi = \int^\oplus \xi(\gh, \bp) \,d\rtd(\gh, \bp)$.
The inner product is given by $\iprod{\xi, \eta} := \int \iprod{ \xi(\gh, \bp), \eta(\gh,
\bp } \,d\rtd(\gh, \bp)$.
Let $T \colon (\gh, \bp) \mapsto T_{\gh, \bp}$ be a measurable assignment of
bounded linear operators on $\ell^2\big(\vertex(\gh)\big)$
with $\mu$-finite
supremum of the norms $\|T_{\gh, \bp}\|$.
Then $T$ induces a bounded linear operator $T := T^\rtd := \int^\oplus T_{\gh,
\bp} \,d\rtd(\gh, \bp)$ on $\Hilb$ via
$$
T^\rtd \colon \int^\oplus \xi(\gh, \bp)
\,d\rtd(\gh, \bp) \mapsto \int^\oplus T_{\gh, \bp} \xi(\gh, \bp)
\,d\rtd(\gh, \bp)
\,.
$$
The norm $\|T^\rtd\|$ of $T^\rtd$ is the $\rtd$-essential supremum of
$\|T_{\gh, \bp}\|$.
We say that $T$ as above is \dfnterm{equivariant} if for all network isomorphisms $\phi \colon \gh_1 \to \gh_2$ preserving the marks, all
$\bp_1, x, y \in \verts(\gh_1)$ and all $\bp_2 \in \verts(\gh_2)$,
we have $\iprod{T_{\gh_1, \bp_1} \delta_x, \delta_y} = \iprod{T_{\gh_2,
\bp_2} \delta_{\phi(x)}, \delta_{\phi(y)}}$. For $T \in B(\Hilb(\mu))$ equivariant, we have in particular that $T_{\gh, \bp}$ depends on
$\gh$ but not on the root $\bp$, so we shall simplify our notation and
write $T_\gh$ in place of $T_{\gh, \bp}$.
For simplicity, we shall even write $T$ for $T_\gh$ when no confusion can arise.

We shall show that sofic probability measures can be extended to sofic
measures on $S$-labelled Schreier networks; any new loops will get new marks
indicating that they were not in the original underlying graph.
This will be an important technical tool. It can only enlarge the class of
equivariant operators.

\procl p.addS
Let $(G_n)_n$ be networks with finitely many vertices and edges
whose random weak limit is $\mu$ and whose
mark space is $\marks$.
Let $|S|$ be at least twice the degree of every vertex in every
$G_n$; possibly $|S| = \infty$.
Then there exist $S$-labelled Schreier
networks $H_n$ with mark space $\marks \times \{0, 1\}$
with the following properties:
\begin{enumerate}
\item[(i)] 
The underlying graph of each component of
$H_n$ is equal to that of\/ $G_n$ except
that $H_n$ may have additional loops whose second mark coordinate is 1.
\item[(ii)] 
The first coordinate marks of each component of
$H_n$ restricted to the underlying graph of $G_n$
agree with the marks on $G_n$.
\item[(iii)] 
The sequence $(H_n)_n$ has a random weak limit carried by $S$-labelled
Schreier networks.
\end{enumerate}
\endprocl

\rproof
Given a locally finite
network $G$ with mark space $\marks$, produce a random $S$-labelled
Schreier network, $\phi(G)$, with mark space $\marks \times \{0, 1\}$ as
follows.
Let $\mk_0$ be an arbitrary element of $\marks$.
Let $U_k(e)$ be independent uniform $[0, 1]$ random variables for $k \ge 1$
and $e \in \edges(G)$.
For an edge $e$, let $N(e)$ be the set of edges (including $e$) that share
an endpoint with $e$.
Write $S = \{s_1, s_2, \ldots\}$.
We shall use the identity map for the involution $i \colon S \to S$.
Assign a second mark coordinate of 0 to every vertex and edge of $G$.
Assign the label $s_1$ to every edge $e$ such that $U_1(e) = \min \{U_1(e')
\st e' \in N(e)\}$.
We assign further labels from $S$ recursively.
Supposing that a partial assignment has been made using the random
fields $U_1, \ldots, U_k$, let $J(e)$ be the minimum index $j$ such that 
no edge in $N(e)$ has been assigned the label $s_j$, if any.
By choice of $|S|$, there is always such an index when $e$ does not yet
have a label.
Now assign the label $s_{J(e)}$ to every $e$ that does not have a label and
for which $U_{k+1}(e) = \min \{U_{k+1}(e') \st e' \in N(e)\}$.
After all these assignments are completed, to every vertex $x$,
add new loops with mark $(\mk_0, 1)$ so that the degree of $x$ is equal
to $|S|$ and so that the resulting network, $\phi(G)$, is $S$-labelled.

Except for the fact that $\phi(G_n)$ is random, the sequence
$\big(\phi(G_n)\big)_n$ has all the desired properties: note that
$U\big(\phi(G_n)\big) \Rightarrow \nu$, where for a measurable set $A$ of
rooted networks,
$$
\nu(A)
:=
\int \P\big[\big(\phi(G), o\big) \in A\big] \,d\mu(G, o)
\,.
$$

To fix this problem, let $\mk_1, \mk_2, \ldots$ be a dense subset of
$\marks$.
Let $\psi_n \colon \marks \to \{\mk_1, \ldots, \mk_n\}$ be a map that takes
each mark $\mk$ to one of the closest points to it among $\{\mk_1, \ldots,
\mk_n\}$.
Then $\psi_n$ naturally induces a map $\tilde \psi_n$ on networks.
The push-forward $\nu_n$ of the law of
$U\big(\phi(G_n)\big)$ by $\tilde \psi_n$ gives a finitely supported
probability measure.
By taking a rational approximation of its probabilities, we may find a
finite (disconnected)
network $H_n$ such that $U(H_n)$ is within total variation distance
$1/n$ of $\nu_n$.
This is the sequence desired. 
\Qed

Let $(X,d)$
be a separable compact metric space. From now on, we shall assume that
$G=(\vertex,\edge)$ is a rooted connected $S$-labelled Schreier network.
Moreover, we assume that the
{\em vertex} labels take values in the space $X$. If $S$ is finite, then
the space of marks $\marks = S \cup X$ is compact. We denote the set of
such (rooted connected $S$-labelled Schreier) networks by $\sGG$ or
$\sGG(S,X)$.
We use the following metric on $\sGG(S, X)$:
Write $S = \{s_1, s_2, \ldots\}$. 
For a rooted $S$-labelled Schreier network $(G, o)$ and $n \ge 1$, let
$G^{n}$ denote the connected component of $o$ in the subnetwork of $G$ formed
by deleting all edges with a label $s_k$ (in either direction) for any $k > n$.
Define a separable complete metric $d_* \colon \sGG \times \sGG \to [0,1]$
on $\sGG$ by letting the distance
between $(G_1, o_1)$ and $(G_2, o_2)$ be $1/(1+\alpha)$, where $\alpha$ is
the supremum of those $r > 0$ such that there is some rooted isomorphism of
the balls of (graph-distance) radius $\flr{r}$ around the roots of
$G_i^{\flr{r}}$ that preserves marks up to an error of at most $1/r$ in the
metric of the mark space.
Even if $S$ is infinite,
$\sGG(S,X)$ is a compact
metric space (basically because $\{0, 1\}^S$ is compact)
and thus for any sequence $(\mu_n)_n$ of probability
measures on $\sGG(S,X)$, we have $\mu_n \Rightarrow \mu$ if and only if
$\mu_n \to \mu$ in the weak* topology. 
For simplicity of notation, we omit all other edge marks; one may actually
encode edge marks via vertex marks in any case.

Before we proceed, let us discuss a natural example.  Let $\Gamma$ be a
group  that acts by
homeomorphisms on a compact metrizable space $X$, preserving a probability
measure, $\mu$. We assume that $\Gamma$ is generated by a finite symmetric set $S \subset \Gamma$ and define the involution $i \colon S \to S$ by $i(s)=s^{-1}$. Then we can associate to each point $x \in X$ the
$S$-labelled rooted Schreier graph that arises from the restriction of the
action of $\Gamma$ to the orbit of $x$. Each vertex in this graph carries a
natural label in $X$. We obtain a continuous map $\varphi \colon X \to
\sGG(S,X)$ and can consider the push-forward measure $\varphi_*(\mu)$.
This measure is unimodular. Moreover, there is a natural equivariant factor
map $\psi \colon \sGG(S,X) \to X$, sending a rooted graph to the label of
its root.

Suppose that $\rtd$ is a unimodular probability measure on $\sGG(S,X)$.
Note that there is a continuous action of $\bF_S$ on the compact space
$\sGG(S,X)$ that moves the root according to the labels seen at the root.
Moreover, this action preserves the measure $\mu$, since $\mu$ is
unimodular. Consider the ring $C(\sGG(S,X))$ of continuous functions on
$\sGG(S,X)$ and the \dfnterm{algebraic crossed product algebra} $C(\sGG(S,X))
\rtimes \bF_S$. Recall that
$C(\sGG(S,X)) \rtimes \bF_S$ is a $*$-algebra and consists of finite
formal sums $\sum_{w \in \bF_S} f_w w$ with $f_w \in C(\sGG(S,X))$. The
multiplication and involution are defined by linearity and the formulas
\begin{align*}
\forall f_1,f_2 \in C(\sGG(S,X)) \ \ \forall w_1,w_2 \in \bF_S \quad (f_1
w_1)
\cdot (f_2 w_2) &:= f_1 (\prescript{w_1}{}{\!f_2})  w_1w_2\,,\\
(f_1 w_1)^* &:=
(\prescript{w_1^{-1}}{}{\!\bar f_1})
w_1^{-1}\,,
\end{align*}
where we use the convention $\prescript{w}{}{\!f}(G,v) := f(G,v.w)$. The measure $\mu$ gives rise to a functional $\tau_{\mu}$ on the crossed product algebra as follows:
$$\tau_{\mu}\left(\sum_{w \in \bF_S} f_w w \right) := \sum_{w \in \bF_S}
\int \bfone_{o.w = o} \cdot f_{w}(G,o) \
d\mu(G,o) \,.$$

There are two natural actions, $F$ and $M$, on $\Hilb(\mu)$
of the algebra of continuous functions on
$\sGG(S,X)$, which will both be of importance. First of all, $f \in
C(\sGG(S,X))$ can act as a constant on the fibers, i.e., $F(f)_{G,o} :=
f(G,o) \cdot I_{\ell^2(\verts(G))}$, or equivalently
$$
F(f)(\xi)
:=
\int^\oplus f(\gh, \bp) \xi(\gh, \bp) \,d\rtd(\gh, \bp)
\,.
$$ 
It is a basic fact that an operator $T
\in B(\Hilb(\mu))$ arises as above
from a measurable family $(G,o) \mapsto T_{G,o}$
iff $T$ commutes with $F(C(\sGG(S,X))$.
A second action of $C(\sGG(S,X))$ is defined by the formula $M(f)_{G,o}
\delta_v = f(G,v) \cdot \delta_v$ for all $v \in \verts(G)$, in other
words, 
$$
M(f)(\xi)
:=
\int^\oplus M(f)_{\gh, \bp} \xi(\gh, \bp) \,d\rtd(\gh, \bp)
\,,
$$
where $M(f)_{G,o} \xi(G, o)(v) := f(G,v) \cdot \xi(G, o)(v)$.

We denote by $\rho_{\mu}(s) \in B(\Hilb(\mu))$ the operator that assigns to $(G,o)$
the unitary operator on $\ell^2(\vertex(G))$ that sends $\delta_v$ to
$\delta_{v.i(s)}$.
It is easy to see that $\rho_{\mu}(s)$ is equivariant
for all $s \in S$, and that this assignment extends to a unitary
representation $\rho_{\mu} \colon \bF_S \to U(\Hilb(\mu))$ such that
$\rho_{\mu}(s)^* = \rho_\mu(i(s))$ for all $s \in S$.
This unitary
representation $\rho_{\mu} \colon \bF_S \to U(\Hilb(\mu))$ extends in turn
to a natural $*$-homomorphism $\rho_{\mu} \colon C(\sGG(S,X)) \rtimes \bF_S
\to B(\Hilb(\mu))$.
Indeed, consider the natural representation of $C(\sGG(S,X))$ on $\Hilb(\mu)$ by
multiplication, $f \mapsto M(f)$. In order to see that $M$ and $\rho_{\mu}
\colon \bF_S \to U(\Hilb(\mu))$ combine via linearity and $\rho_\mu(f w) :=
M(f) \rho_\mu(w)$ to a $*$-representation of the algebraic crossed product,
it suffices to check that
$$\forall w \in \bF_S, f \in C(\sGG(S,X)) \quad \rho_{\mu}(w) M(f)
\rho_{\mu}(w)^* = M({}^w\!f)\,,$$
as a simple verification using the definition of $C(\sGG(S,X)) \rtimes
\bF_S$ shows.
To prove that this equation indeed holds, we compute in $\ell^2(\verts(G))$
for $v \in \vertex(G)$ that
$$\rho_{\mu}(w) M(f) \rho_{\mu}(w)^*\delta_v = 
\rho_{\mu}(w) M(f) \delta_{v.w} = f(G,v.w) \cdot \rho_{\mu}(w) \delta_{v.w}
= M({}^{w}\!f)\delta_v\,.$$
This shows that $\rho_{\mu} \colon C(\sGG(S,X)) \rtimes \bF_S \to
B(\Hilb(\mu))$ exists as desired.

Now, if $\delta^{\mu} \in \Hilb(\mu)$ denotes the naturally defined
vector $(G,o) \mapsto \delta_o \in \ell^2(\verts(G))$, then 
$$\forall T \in C(\sGG(S,X)) \rtimes \bF_S \qquad \tau_\mu(T) = \langle
\rho_{\mu}(T) \delta^{\mu}, \delta^{\mu} \rangle\,.$$
Indeed, for all $w \in \bF_S, f \in C(\sGG(S,X))$, we have
\begin{eqnarray*}
\langle
\rho_{\mu}(fw) \delta^{\mu}, \delta^{\mu} \rangle
&=& \langle
M(f) \rho(w) \delta^{\mu}, \delta^{\mu} \rangle \\
&=& \int \langle
M(f) \rho(w) \delta_o, \delta_o \rangle \ d\mu(G,o) \\
&=& \int \langle
M(f) \delta_{o.w^{-1}}, \delta_o \rangle \ d\mu(G,o) \\
&=& \int \bfone_{o.w = o}  \cdot f(G,o)\ d \mu(G,o).
\end{eqnarray*}

This shows that $\tau_{\mu} \colon C(\sGG(S,X)) \rtimes \bF_S \to \bbC$ is
a positive linear functional. Although we shall not use it, we remark that
the Hilbert space $\Hilb(\mu)$ is the
GNS-construction associated with the trace $\tau_{\mu}$; see  \cite[Lemma 4 in Chapter 4]{Dixmier} for basics about the GNS-construction.
We denote by $R(\mu)$ the von Neumann algebra generated by the
$\rho_\mu$-image of
$C(\sGG(S,X)) \rtimes \bF_S$, i.e., $R(\mu):= \rho_{\mu}(C(\sGG(S,X))
\rtimes \bF_S)''$.
We call $R(\mu)$ the \dfnterm{von Neumann algebra of the unimodular random
network  $\mu$}. Since $F(C(\sGG(S,X)) \subset \rho_{\mu}(C(\sGG(S,X))
\rtimes \bF_S)'$, we conclude that $R(\mu) = \rho_{\mu}(C(\sGG(S,X))
\rtimes \bF_S)'' \subset F(C(\sGG(S,X))'$. Hence, every operator $T \in R(\mu)$ arises from a measurable family $(G,o) \mapsto T_{G,o}$.
We extend $\tau_{\mu}$ to a normal, positive linear functional
$\tr_{\mu}$ on $R(\mu)$ by the formula
\begin{equation} \label{deftrace}
\tr_{\mu}(T):= \langle T \delta^{\mu},\delta^{\mu} \rangle = \E\big[ \iprod{T_{\gh,o} \delta_o, \delta_o} \big] := \int \iprod{T_{\gh,o} \delta_o, \delta_o}
\,d\rtd(\gh, \bp)\,.
\end{equation}
The left-regular representation $\lambda_{\mu} \colon \bF_S \to U(\Hilb(\mu))$
is defined as acting on the underlying measure space, i.e., a vector $(G,o)
\mapsto \xi(G,o)$ is mapped via $\lambda_\mu(w)$
to $(G,o) \mapsto \xi(G,o.w)$. Using similar
arguments as above, we see that $F$ (rather than $M$) and $\lambda_{\mu}$
combine to give another representation $\lambda_{\mu} \colon C(\sGG(S,X))
\rtimes \bF_S \to  B(\Hilb(\mu))$ and we set $L(\mu):=
\lambda_{\mu}\big(C(\sGG(S,X)) \rtimes \bF_S\big)''$. It is now a matter of
checking definitions to see that an operator $T \in B(\Hilb(\mu))$ is
equivariant if and only if $T \in L(\mu)'$.

Put
$N_{\tau_{\mu}}:= \{T \in C(\sGG(S,X)) \rtimes \bF_S \st
\tau_{\mu}(T^*T)=0 \}$.
In order to put the players in the right framework, let us note that 
$\left(C(\sGG(S,X)) \rtimes \bF_S \right) /N_{\tau_{\mu}}$ together with
the inner-product $(T_1|T_2):= \tau_{\mu}(T_2^*T_1)$ is a Hilbert algebra in the
sense of \cite[Chapters 5 and 6]{Dixmier}. The algebras $L(\mu)$ and
$R(\mu)$ can be identified with the von Neumann algebras that are
left- and right-associated with this Hilbert algebra. Indeed, as we mentioned
above, the associated von Neumann algebras arise from the natural GNS-construction.

It follows from the Commutation Theorem \cite[Theorem 1 on page 80]{Dixmier} that $R(\mu) = L(\mu)'$, i.e., the operators in $R(\mu)$ are precisely the equivariant operators.
It was proved by \rref b.AL:urn/ that
$T \mapsto \int \iprod{T_{\gh,o} \delta_o, \delta_o}
\,d\rtd(\gh, \bp)$ is a trace on the algebra of equivariant operators. This
result is also an easy consequence of the general theory of Hilbert
algebras; see, for example, \cite[Theorem 1 on page 97]{Dixmier}.

Another useful point of view is to see $R(\mu)$ as the von Neumann algebra
associated with a discrete measured groupoid; see \cite{renault} for a
definition. Indeed, consider the $r$-discrete topological groupoid with
base space $\sGG(S,X)$ and an arrow between $(G,o)$ and $(G',o')$ for each
$v \in G$ such that $(G,v) \cong (G',o')$. Any unimodular measure $\mu$
turns this object into a discrete measured groupoid. The von Neumann
algebra $R(\mu)$ that has been described concretely above is the von
Neumann algebra associated with the discrete measured groupoid associated
to the measure $\mu$. We refer to \cite{feldmanmoore2} for details about the von Neumann algebra associated to a discrete measured equivalence relation and to \cite[Section 3]{sauer} or \cite{renault} for an extension to the realm of discrete measured groupoids.

Let us summarize:
\begin{thm} An operator $T \in B(\Hilb(\mu))$ is equivariant if and only if $T \in R(\mu)$.
The pair $(R(\mu),\tr_{\mu})$ is a tracial von Neumann algebra.
\end{thm}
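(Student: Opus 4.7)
The first claim combines two facts the excerpt has already isolated: (i) the characterization stated one paragraph above, that $T \in B(\Hilb(\rtd))$ is equivariant if and only if $T \in L(\rtd)'$, and (ii) the Commutation Theorem, also cited just above, which gives $R(\rtd) = L(\rtd)'$. Chaining these two equivalences yields directly that $T$ is equivariant if and only if $T \in R(\rtd)$. No further work is needed here.

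For the second claim, $R(\rtd)$ is a von Neumann algebra by construction, since it was defined as the double commutant $\rho_{\rtd}\bigl(C(\sGG(S,X)) \rtimes \bF_S\bigr)''$. To show that $(R(\rtd),\tr_{\rtd})$ is a tracial von Neumann algebra, I would verify the four required properties of $\tr_{\rtd}$ (normal, positive, unital, faithful) together with the trace property. Positivity and unitality are immediate from the formula \eqref{deftrace}: $\tr_{\rtd}(T^*T) = \|T\delta^{\rtd}\|^2 \ge 0$, while $\tr_{\rtd}(I) = \|\delta^{\rtd}\|^2 = \int \iprod{\delta_{\bp},\delta_{\bp}}\,d\rtd(\gh,\bp) = 1$. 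Normality is automatic because $\tr_{\rtd}$ is a vector state and hence lies in the predual of $B(\Hilb(\rtd))$. The trace property itself is recorded in the sentences preceding the theorem, attributed both to \rref b.AL:urn/ and to the general theory of Hilbert algebras, so I would simply quote it.

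The only genuinely non-routine point is faithfulness, and my plan is to extract it from the Hilbert algebra framework the paper has just set up. In that picture, $\Hilb(\rtd)$ is identified with the GNS completion of $\bigl(C(\sGG(S,X)) \rtimes \bF_S\bigr)/N_{\tau_{\rtd}}$, and $\delta^{\rtd}$ corresponds to the class of the identity element. This makes $\delta^{\rtd}$ cyclic for both $L(\rtd)$ (acting by left multiplication via $\lambda_{\rtd}$) and $R(\rtd)$ (acting by right multiplication via $\rho_{\rtd}$). Cyclicity of $\delta^{\rtd}$ for $L(\rtd)$ forces $\delta^{\rtd}$ to be separating for the commutant $L(\rtd)' = R(\rtd)$, so if $T \in R(\rtd)$ satisfies $\tr_{\rtd}(T^*T) = \|T\delta^{\rtd}\|^2 = 0$, then $T\delta^{\rtd} = 0$ and therefore $T = 0$. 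Alternatively, one may invoke the standard fact \cite[Chapters 5 and 6]{Dixmier} that the canonical trace associated with a full Hilbert algebra is automatically faithful on each of the two associated von Neumann algebras.

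The main obstacle to executing the plan cleanly is the verification that $\delta^{\rtd}$ is genuinely cyclic for $L(\rtd)$, i.e., that $L(\rtd)\delta^{\rtd}$ is dense in $\Hilb(\rtd)$; this is what bridges the abstract GNS picture with the concrete direct-integral realization $\Hilb(\rtd) = \int^{\oplus} \ell^2(\verts(\gh))\,d\rtd(\gh,\bp)$. It follows from two ingredients: the $\bF_S$-orbit of $\bp$ exhausts $\verts(\gh)$ in every connected $S$-labelled Schreier network, so that $\lambda_{\rtd}(w)\delta^{\rtd}$ and its $F$-multiples already produce vectors of the form $(G,\bp) \mapsto f(G,\bp)\delta_{\bp.w}$ concentrated at every vertex; and the density of $C(\sGG(S,X))$ in $L^2(\rtd)$ gives the remaining amplitude freedom. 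Once cyclicity is in hand, the rest of the argument is purely formal.
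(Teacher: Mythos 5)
Your proposal is correct and follows essentially the same route as the paper, which offers no standalone proof: the theorem is explicitly a summary of the preceding discussion, namely the observation that equivariance is equivalent to membership in $L(\mu)'$ combined with the Commutation Theorem giving $R(\mu) = L(\mu)'$, together with the trace property quoted from \cite{AL:urn} or the Hilbert algebra theory of \cite{Dixmier}. Your only addition is to make faithfulness explicit via the cyclic/separating-vector argument for $\delta^{\mu}$, which is a correct and worthwhile elaboration of the ``general theory of Hilbert algebras'' that the paper invokes implicitly.
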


We illustrate the definitions in two special cases.
(1) If $G=(\vertex,\edge)$ is a finite $S$-labelled Schreier network with automorphism
group $\Lambda$ and we consider the natural action of $\Lambda$ on $\ell^2
\vertex$, then there exists a natural isomorphism $$R(U(G)) \stackrel{\sim}{\to}
B(\ell^2 \vertex)^{\Lambda}:=\{ T \in B(\ell^2 \vertex) \st \forall \lambda
\in \Lambda \ T \lambda = \lambda T \}\,.$$
(2) If $\mu$ is concentrated on a Cayley diagram of a group $\Gamma$ (with
finite generating set $S$) and $X$ is a singleton, then $R(\mu) = R(\Gamma)$.

In complete analogy to the group case, we can define the von Neumann
algebra $R(\mu,S) \subset B(\Hilb(\mu,S))$, where $$\Hilb(\mu,S) :=
\int_{\sGG(S,X)} \ell^2(\edge(G)) \ d\mu(G,o)\,.$$
Again, there is a natural isomorphism $R(\mu,S) = M_S \left(R(\mu)
\right)$. We denote the natural trace on $R(\mu,S)$ by $\tr_{\mu} \colon R(\mu,S) \to \bbC$.

We shall now state and prove an embedding theorem for sofic unimodular
networks. The techniques are inspired by work of Elek and Lippner
\cite{eleklip} and P\u{a}unescu \cite{paunescu}. Again, the point is not to
give another proof of these results, but to prove new approximation formulas
that allow for applications to the approximation of the associated determinantal measures.

\begin{thm} \label{unimodapp}
Let $(G_n)_n$ be a sequence of finite $S$-labelled Schreier networks and let $\mu$
be a probability measure on $\sGG(S,X)$. Let $\omega$ be a non-principal
ultrafilter on $\bbN$. If 
$U(\gh_n)
\cd \rtd$, then there exists a trace-preserving embedding
$$\iota \colon (R(\mu),\tr_{\mu}) \to \prod_{n \to \omega} (R(U(G_n)),
\tr_{U(G_n)})\,.$$
Moreover, there exists a sequence of probability measures $(\nu_n)_n$ on
$\big(\{G_n\} \times \vertex(G_n)\big) \times \sGG(S,X)$ with the following properties:
\begin{enumerate}
\item[(i)] The measure $\nu_n$ has marginals $U(G_n)$ and $\mu$.
\item[(ii)] With respect to the natural metric $d_*$ on $\sGG(S,X)$, we have
$$\lim_{n \to \infty} \int d_*((G_n,v),(G,o)) \ d \nu_n((G_n, v),(G,o)) \to 0\,.$$
\item[(iii)] 
If $(T_n)_n \in \ell^{\infty}( \bbN, (R(U(G_n)), \tr_{U(G_n)}))$ represents $\iota(T)$ for some $T \in R(\mu)$, then
\begin{equation} \label{eq1c}
\lim_{n \to \omega} \int \left| 
\iprod{T_n \delta_{v.\gamma}, \delta_{v.\gamma'}} - \iprod{T_{G,o}
\delta_{o.\gpe}, \delta_{o.\gpe'}} \right| d \nu_n((G_n, v),(G,o)) =0 
\end{equation}
for all $\gpe, \gpe' \in \bF_S$.
\end{enumerate}
\end{thm}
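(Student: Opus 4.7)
The plan is to mirror the Cayley-graph proof in Proposition \ref{emb}, with the crossed product $C(\sGG(S,X)) \rtimes \bF_S$ replacing $\bbC \bF_S$ and a coupling $\nu_n$ of $U(G_n)$ with $\mu$ replacing the uniform average over vertices. To construct $\nu_n$ satisfying (i)--(ii): since $\sGG(S,X)$ is compact and $d_*$ is bounded, Skorokhod's theorem combined with bounded convergence (equivalently, weak convergence and $W_1$-convergence coincide in this setting) provides couplings with $\int d_* \,d\nu_n \to 0$.

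For the embedding $\iota$, define $\rho_n \colon C(\sGG(S,X)) \rtimes \bF_S \to B(\ell^2(\verts(G_n)))$ by $\rho_n(fw) := M_n(f) \pi_n(w)$, where $M_n(f) \delta_v := f((G_n, v)) \delta_v$ and $\pi_n(w) \delta_v := \delta_{v.w^{-1}}$. The key structural observation is that, because $f$ is a function on rooted-isomorphism classes and $S$-label-preserving automorphisms of $G_n$ commute with the $\bF_S$-action, each $\rho_n(fw)$ is $\Aut(G_n)$-equivariant and hence lies in $R(U(G_n)) \cong B(\ell^2(\verts(G_n)))^{\Aut(G_n)}$. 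A short calculation gives
$$\tr_{U(G_n)}(\rho_n(fw)) = \int \bfone_{o.w = o} \, f(G, o) \, dU(G_n)(G, o);$$
continuity of the integrand on $\sGG(S,X)$ and the weak convergence $U(G_n) \cd \mu$ yield $\tr_\omega \circ \rho_\omega = \tau_\mu$ on the crossed product, where $\rho_\omega$ is the induced $*$-homomorphism into the ultraproduct. Weak density of $\rho_\mu(C(\sGG(S,X)) \rtimes \bF_S)$ in $R(\mu)$, together with the standard extension of trace-preserving $*$-homomorphisms from weakly dense $*$-subalgebras of tracial von Neumann algebras, then produces the unique trace-preserving embedding $\iota \colon R(\mu) \to \prod_{n \to \omega} R(U(G_n))$.

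For the matrix-entry approximation (iii), first take $T = \rho_\mu(fw)$ with representative $T_n := \rho_n(fw)$: both sides of \eqref{eq1c} are values of the function $g(H, x) := f((H, x.\gamma.w^{-1})) \bfone_{x.\gamma.w^{-1} = x.\gamma'}$, which is continuous, and hence uniformly continuous, on the compact space $\sGG(S,X)$. Uniform continuity combined with $\int d_* \,d\nu_n \to 0$ gives \eqref{eq1c}; linearity extends this to the whole image of the crossed product. To check that \eqref{eq1c} depends only on $\iota(T)$, use injectivity of $v \mapsto v.\gamma$ and Cauchy--Schwarz to obtain
$$\frac{1}{|\verts(G_n)|} \sum_v |\iprod{T_n \delta_{v.\gamma}, \delta_{v.\gamma'}}| \le \sqrt{\tr_{U(G_n)}(T_n^* T_n)},$$
so sequences representing $0$ contribute nothing in the limit. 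Finally, for general $T \in R(\mu)$, Kaplansky density provides $a_k \in C(\sGG(S,X)) \rtimes \bF_S$ with $\rho_\mu(a_k) \to T$ in SOT and $\sup_k \|\rho_\mu(a_k)\| \le \|T\|$; a diagonal choice $k = k(n)$ yields a representative $(\rho_n(a_{k(n)}))_n$ of $\iota(T)$ verifying \eqref{eq1c} simultaneously for all countably many pairs $(\gamma, \gamma') \in \bF_S \times \bF_S$.

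I expect the main technical hurdle to be the verification of $\Aut(G_n)$-equivariance and the trace-preservation of $\rho_n$: both rest on $f$ being a function on rooted-isomorphism classes and on the compatibility between the $\bF_S$-action and $S$-label-preserving graph automorphisms. This is the structural point that justifies replacing the group ring of Proposition \ref{emb} with the crossed product and makes the whole construction compatible with the equivariant algebras $R(U(G_n))$; once it is in place, the remaining steps reduce to continuity on a compact space and a standard diagonalization, just as in the proof of Proposition \ref{emb}.
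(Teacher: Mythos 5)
Your proposal is correct and follows essentially the same route as the paper: construct the couplings $\nu_n$ from the equivalence of weak and Wasserstein convergence on the compact space $\sGG(S,X)$, build $\iota$ by extending the trace-preserving $*$-homomorphism $\lim_{n\to\omega}\rho_{U(G_n)}$ from the weakly dense image of $C(\sGG(S,X))\rtimes\bF_S$, and prove (iii) first for crossed-product elements via uniform continuity of the matrix-entry functions and then for general $T$ by Kaplansky density plus diagonalization. The only genuine (and welcome) variation is your Cauchy--Schwarz bound $\frac{1}{|\verts(G_n)|}\sum_v |\iprod{T_n\delta_{v.\gamma},\delta_{v.\gamma'}}| \le \sqrt{\tr_{U(G_n)}(T_n^*T_n)}$ for showing that \eqref{eq1c} depends only on $\iota(T)$, which is cleaner than the paper's decomposition of $T_n$ into four positive parts; your explicit check that $\rho_n(fw)$ is $\Aut(G_n)$-equivariant is a detail the paper leaves implicit.
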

\begin{proof} 
First of all, $\mu$ is unimodular. Recall the 
positive and unital trace $\tau_{\mu} \colon C(\sGG(S,X))
\rtimes \bF_S \to \bbC$ defined in \eqref{deftrace}. 
As in the proof of Proposition \ref{emb}, we consider
the unital $*$-homomorphism
$$\rho:= \lim_{n \to \omega} \rho_{U(G_n)} \colon C(\sGG(S,X)) \rtimes
\bF_S \to \prod_{n \to \omega} (R(U(G_n)), \tr_{U(G_n)})\,.$$
Since $U(G_n) \cd \mu$, we have that $\tau_{\mu} = \tr_{\omega} \circ \rho$,
where $\tr_{\omega}$
denotes the trace on the ultraproduct, i.e., $\tr_\omega((T_n)_n) :=
\lim_{n \to \omega} \tr_{U(G_n)}(T_n)$ for all norm-bounded sequences
$(T_n)_n$ with $T_n \in R(U(G_n))$. As $\tr_{\mu}$ is faithful on $R(\mu)$,
$\rho$ factors through the image $\im (\rho_\mu)$ of $\rho_{\mu} \colon
C(\sGG(S,X)) \rtimes \bF_S \to R(\mu)$. That is, there is a unique 
bounded $*$-homomorphism
$$
\psi
\colon \im (\rho_\mu) \to \prod_{n \to \omega} (R(U(G_n)), \tr_{U(G_n)})
$$
such that $\rho = \psi \circ \rho_\mu$.
By definition
$\im (\rho_{\mu})$ is weakly dense in $R(\mu)$, whence
$\psi$ extends to a
trace-preserving $*$-homomorphism $\iota$ from $R(\mu)$ to the
ultraproduct von Neumann algebra.

Weak convergence of measures on $\big(\sGG(S,X), d_*\big)$ is equivalent
(see the last corollary in \cite{Strassen} or \cite[3.1.1]{Skorohod})
to convergence in the Wasserstein metric
$$d_{\rm W}(\mu',\mu) := \inf_{\nu} \int d_*((G',o'),(G,o))\
d\nu((G',o'),(G,o))\,,$$ where
the infimum is taken over all measures $\nu$ on $\sGG(S,X) \times \sGG(S,X)$
with marginals $\mu'$ and $\mu$. Hence, we obtain a sequence of measures
$\nu_n'$ on $\sGG(S,X) \times \sGG(S,X)$ with marginals $U(G_n)$ and
$\mu$ so that 
$$\lim_{n \to \infty} \int d_*((G',o'),(G,o)) \ d \nu'_n((G',o'),(G,o)) \to
0\,.$$
Since the natural map $\big(G_n \times \verts(G_n)\big) \times \sGG(S,X) \to \sGG(S,X) \times
\sGG(S,X)$ is finite-to-one,
we can lift $\nu'_n$ to a measure
$\nu_n$ on $(G_n \times \verts(G_n)\big)\times \sGG(S,X)$. This proves (i) and (ii). 

It remains
to prove claim (iii) and in doing so, we follow the strategy of the proof of
Proposition \ref{emb}. Indeed, using the arguments in the proof of
Proposition \ref{emb}, it is again easy to see that the truth of (iii) 
depends only on $T \in R(\mu)$ and not on the choice of an approximating
sequence $(T_n)_n$. If $T$ lies in the image of $C(\sGG(S,X)) \rtimes
\bF_S$ and $T' = \sum_{w} f_w w$ is some choice of a preimage of $T$ in the
crossed product algebra, then there is a canonical approximating sequence
$(T_n)_n$ that represents $\iota(T)$. Indeed, for each $n$, there is a
$*$-homomorphism $\rho_{U(G_n)} \colon C(\sGG(S,X))
\rtimes \bF_S \to R(U(G_n))$ and we set $T_n := \rho_{U(G_n)}(T')$ for each $n \in
\bbN$.
For each $w \in \bF_S$, the function $f_w$ in the presentation of $T'$ is
uniformly continuous and hence $(G,o) \mapsto \langle T \delta_{o.\gamma},
\delta_{o.\gamma'} \rangle$ is uniformly continuous as well for such $T$.
Thus, (ii) easily implies (iii) for every element in the image of
$C(\sGG(S,X)) \rtimes \bF_S$. As in the proof of Proposition \ref{emb}, a
diagonalization
argument shows that (iii) holds for all $T \in R(\mu)$. This finishes the
proof. 
\end{proof}

\bsection{Existence of sofic monotone couplings}{s.sofic-couple}

Let $\mu$ be a unimodular probability measure on rooted networks.
Given an equivariant positive contraction $Q$ on $\Hilb(\mu)$,
we obtain a determinantal probability measure
$\P^{Q_G}$ on $\{0, 1\}^{\vertex(G)}$ associated to $\mu$-a.e.\ rooted network
$(G, o)$.
Note that if $G$ already has marks, then we regard $\P^{Q_G}$ as producing
(at random) new marks, $\eta \in \{0, 1\}^{\verts(G)}$,
which we may take formally as second coordinates after the existing marks.
In other words, we define the probability measure $\mu^Q$
by the equation
$$
\mu^Q\big[B(o, r; G) \cong (A, v),\ \eta \restrict C \equiv 1\big]
=
\int_{[B(o, r; G) \cong (A, v)]} \det(Q_G \restrict C) \,d\mu(G, o)
$$
for every rooted network $(A, v)$ of radius $r$ and every measurable choice
of $C \subseteq B(o, r; G)$.
Using involution invariance, it is easy to check that $\mu^Q$ is
unimodular.

As shown in \rref p.addS/, we may assume that $\mu$ is carried by
$S$-labelled Schreier networks.
In this case, it suffices to take the measurable choice
of $C \subseteq B(o, r; G)$ in the definition of $\mu^Q$ to be of the form
$\{o.w_1, \ldots, o.w_n\}$ for some $w_1, \ldots, w_n \in \FS$.

As a special case, let $G$ be a finite network and $Q$ be a positive
contraction on $\ell^2\big(\vertex(G)\big)$. Then $U(G)^Q$ is the
determinantal probability measure reviewed in \rref s.def/, regarded as a
randomly rooted network.

Given a unimodular probability measure $\mu$ with mark space $\marks$ and
two unimodular probability measures $\mu_i$ with mark spaces $\marks \times
\{0, 1\}$ for $i = 1, 2$, both of whose marginals forgetting the second
coordinate of the marks is $\mu$, we say that a unimodular probability measure
$\nu$ with the 3-coordinate 
mark space $\marks \times \{0, 1\} \times \{0, 1\}$ is a
\dfnterm{monotone coupling} of $\mu_1$ and $\mu_2$ if its marginal
forgetting the coordinate $4-i$ is $\mu_i$ for $i = 1, 2$ and $\nu$ is
concentrated on networks whose marks $(\xi, j, k)$
satisfy $j \le k$.

We shall prove the following extension of \rref t.gencouple/:

\procl t.gencoupleURN
Let $\mu$ be a sofic probability measure on rooted networks.
If $0 \le Q_1 \le Q_2 \le I$ in $R(\mu)$, then there
exists a sofic monotone coupling of $\P^{Q_1}$ and $\P^{Q_2}$.
\endprocl

As we noted, we may assume that $\mu$ is carried by
Schreier networks.
We have the following extension of
\rref l.ultradetl/:

\procl l.ultradetlURN
Let $(G_n)_n$ be a sequence of finite $S$-labelled Schreier
networks whose random weak limit is $\mu$. Let $\iota$ and $\iota_S$
be trace-preserving embeddings as in Theorem \ref{unimodapp}.
Let $T \in R(\mu)$ be such that $0 \leq T \leq I$ and suppose that $(T_n)_n$
represents $\iota(T)$ in the ultraproduct $\prod_{n \to \omega}
(B(\ell^2 \vertex_n),\tr_{\vertex_n})$ with $0 \leq T_n \leq I$ for
each $n \in \bbN$.
Then $\lim_{n \to \omega} U(G_n)^{T_n} = \mu^T$ in the weak topology.
\endprocl

The proof is essentially the same as that for \rref l.ultradetl/, using the
coupling probability measures $\nu_n$ of Theorem \ref{unimodapp} (but not
assuming any analogue of the injectivity of $\pi$).

We may now use \rref l.ultradetlURN/ to prove \rref t.gencoupleURN/,
just as \rref t.gencouple/ was proved.

All the above was for determinantal probability measures on subsets of
vertices. In order to deduce corresponding results for determinantal
probability measures on subsets of edges (or even ``mixed" measures on
subsets of both vertices and edges), we use the following construction.
Given a network $G$, subdivide each edge $e$ by adding a new vertex $x_e$,
which is joined to each endpoint of $e$ and which receives the mark of $e$.
Also assign a second coordinate to the new vertices so that we may
distinguish them.
Provided that the expected degree of the root under the unimodular measure
$\mu$ is finite, we may choose a re-rooting of the subdivided networks in
order to obtain a natural unimodular probability measure, $\tilde \mu$: see
\cite[Example 9.8]{AL:urn} for details.
In fact, when $\mu$ is the random weak limit of $(G_n)_n$,
we may simply subdivide the edges of $G_n$ and take the random weak limit
of the resulting networks, $G'_n$.
Using the finiteness of the expected degree under $\mu$,
it is not hard to check that $U(G'_n)$ does indeed converge to $\tilde \mu$.
With this construction, if we desire a determinantal probability measure on
the edges for $\mu$, we may simply use the corresponding positive
contraction on the vertices for $\tilde \mu$, where all entries are 0 that
do not correspond to a pair of new vertices.

In particular, this result allows to extend our observations on the existence of invariant monotone couplings between $\wsf$ and $\fsf$ to unimodular random networks. In combination with the results in \cite{abertviragthom}, we are also able to extend the results in Section \ref{s.approxim}. We omit proofs since the strategy and the techniques are unchanged.

\section*{Acknowledgments}

This research started when R.L. visited Universit\"at G\"ottingen as a guest of the Courant Research Centre G\"ottingen in November 2008 and was continued later when A.T.\ visited Indiana University at Bloomington as a Visiting Scholar in March 2011. A.T. thanks ERC for support.
We thank Ben Hayes for help with the proof of \rref l.dbarSchatten/.

\end{document}